\documentclass[12pt,reqno]{amsart}

\usepackage[usenames]{color}
\usepackage{amsmath,pdfsync,verbatim,graphicx,epstopdf,enumerate}
\usepackage[colorlinks=true]{hyperref}
\hypersetup{allcolors=blue}
\pretolerance=4000
\setlength{\topmargin}{-.25in}
\setlength{\textheight}{9in}
\setlength{\textwidth}{7in}
\setlength{\headheight}{26pt}
\setlength{\headsep}{2pt}
\setlength{\oddsidemargin}{-0.25in}
\setlength{\evensidemargin}{-0.25in}
\numberwithin{equation}{section}

\newcommand{\I}{{i}}
\newcommand{\D}{\mathrm{d}}

\newcommand{\wh}{\widehat}

\newcommand{\lb}{\left(}
\newcommand{\Lm}{\left\lvert}
\newcommand{\Rm}{\right\rvert}
\newcommand{\vp}{\varphi}
\newcommand{\ve}{\varepsilon}

\newcommand{\rb}{\right)}

\newcommand{\wt}{\widetilde}
\newcommand{\Ac}{\mathcal{A}}
\newcommand{\Bc}{\mathcal{B}}

\newcommand{\Dc}{\mathcal{D}}
\newcommand{\Ec}{\mathcal{E}}

\newcommand{\Nc}{\mathcal{N}}
\newcommand{\Oc}{\mathcal{O}}

\newcommand{\Qc}{\mathcal{Q}}

\newcommand{\Wc}{\mathcal{W}}

\newcommand{\Bb}{\mathbb{B}}
\newcommand{\Cb}{\mathbb{C}}

\newcommand{\Rb}{\mathbb{R}}

\newcommand{\Beq}{\begin{equation}}
\newcommand{\Eeq}{\end{equation}}
\newcommand{\beq}{\begin{equation*}}
\newcommand{\eeq}{\end{equation*}}
\newcommand{\bal}{\begin{align}}
\newcommand{\eal}{\end{align}}
\renewcommand{\O}{\Omega}

\newcommand{\LM}{\left\lVert}
\newcommand{\RM}{\right\rVert}

\newcommand{\A}{\alpha}

\newcommand{\bp}{\begin{prob}}
\newcommand{\ep}{\end{prob}}
\newcommand{\bpr}{\begin{proof}}
\newcommand{\epr}{\end{proof}}

\newcommand{\Lc}{\mathcal{L}}

\newcommand{\tred}[1]{{\color{red}{#1}}}

\newcommand{\st}{\,:\,}

\newcommand{\bel}[1]{\begin{equation}\label{#1}}
\newcommand{\ee}{\end{equation}}

\newcommand{\ssubset}{\subset\joinrel\subset}

\newtheorem{theorem}{Theorem}[section]
\newtheorem{corollary}[theorem]{Corollary}

\newtheorem{proposition}[theorem]{Proposition}

\newtheorem{definition}[theorem]{Definition}
\newtheorem{remark}[theorem]{Remark}

\newcommand{\h}{\tau}%{\mathfrak{h}}
\newcommand{\hh}{\tau_1}%{\mathfrak{h}}

\title[Inverse Problem for Biharmonic Operator]{An Inverse Problem on Determining Second order Symmetric Tensor for Perturbed Biharmonic Operator}
\author[Bhattacharyya and Ghosh]{Sombuddha Bhattacharyya$^\ast$ and Tuhin Ghosh$^{\ast\ast}$}
\address{$^{\ast}$ Department of Mathematics,
\newline\indent\: \ Indian Institute of Science Education and Research, Bhopal, India.
\newline
\indent\: \
E-mail:{\tt \  sombuddha@iiserb.ac.in}}
\address {$^{\ast\ast}$Department of Mathematics, 
	Universit\"{a}t Bielefeld.
\newline
\indent\: 
E-mail:{\tt\  tghosh@math.uni-bielefeld.de}}

\begin{document}

\begin{abstract}
This article offers a study of the Calder\'{o}n type inverse problem of determining up to second order coefficients of higher order elliptic operators. Here we show that it is possible to determine an anisotropic second order perturbation given by a symmetric matrix, along with a first order perturbation given by a vector field and a zero-th order potential function inside a bounded domain, by measuring the Dirichlet to Neumann map of the perturbed biharmonic operator on the boundary of that domain. 

%Our result extends the studies of the Calder\'on type inverse problems for higher order operators, in the direction of determining higher order tensor appearing as lower order perturbations of polyharmonic operators.

%The novelty of this work is to recover the second order matrix perturbation $(\sum_{j,k=1}^{n}A_{jk}\partial_j\partial_k)$ of a biharmonic (4th order) operator from the DN map, where previously only an isotropic perturbation $(A(x)\Delta)$ has been recovered given the same data.
%Furthermore, we do not encounter any invariance in the second order term and fully determine all the lower order perturbations inside our domain under observation, from the data measured only at the boundary.

%We consider the following perturbed polyharmonic operator $\Lc(x,D)$ of order $2m$ defined in a bounded domain $\Omega \subset \mathbb{R}^n, n\geq 3$ with smooth boundary, as
%%\begin{equation*}
%\Lc(x,D) \equiv (-\Delta)^2 + \sum_{j,k=1}^{n}A_{jk} D_{j}D_{k} + \sum_{j=1}^{n}B_{j} D_{j} + q(x),
%\end{equation*} 
%where $A$ is a symmetric $2$-tensor field, $B$ and $q$ are vector field and scalar potential respectively. 
%We show that the coefficients $A=[A_{jk}]$, $B=(B_j)$ and $q$ can be recovered from the associated Dirichlet-to-Neumann data on the boundary. Note that, this result shows an example of determining higher order ($2$nd order) symmetric tensor field in the class of inverse boundary value problem. 
\end{abstract}
\subjclass[2010]{Primary 35R30, 31B20, 31B30, 35J40}
\keywords{Calder\'{o}n problem; Perturbed polyharmonic operator, Second order anisotropic perturbation}

\maketitle
\section{Introduction and statement of the main result}
Let $\Omega\subset\Rb^{n},$ $n\geq 3$ be a bounded domain with smooth connected boundary. Let us consider the following perturbed biharmonic operator $\Lc(x,D)$ of order $4$, with perturbations up to second order, of the following form: 
\begin{equation}\label{operator}
	\Lc_{A,B,q}(x,D)=\Lc(x,D):=
	(-\Delta)^2 + \sum_{j,k=1}^{n} A_{jk}(x) D_{j}D_{k} + \sum_{j=1}^{n} B_{j}(x) D_{j} + q(x),
\end{equation}
where $D_j=\frac{1}{i}\partial_{x_j}$, $A=(A_{jk}) \in W^{3,\infty}(\Omega, \mathbb{C}^{n^2})$, $B=(B_{j}) \in W^{2,\infty}(\Omega, \mathbb{C}^{n})$ and $q \in L^{\infty}(\Omega, \mathbb{C})$ are the perturbation coefficients.

Here in this article we will be considering an inverse problem of recovering the coefficients $A$, $B$, $q$ in $\Omega$ from the knowledge of the Dirichlet to Neumann map (DN map):
\[\Big{(} u|_{\partial\Omega}, (-\Delta)u|_{\partial\Omega} \Big{)}\mapsto  \Big{(}\partial_{\nu}u|_{\partial\Omega}, \partial_{\nu}(-\Delta)u|_{\partial\Omega}\Big{)}\] 
corresponding to $\Lc(x,D)$, given on the boundary $\partial\O$.
The novelty of this work lies in full global recovery of the second order anisotropic matrix perturbation, along with the first and zero-th order perturbed terms of a biharmonic operator.

The inverse problem we present in this article falls into the generalized  category of the Calder\'{o}n type inverse problem. The original Calder\'on's problem \cite{Calderon1980} modeled by the second order elliptic operator $-\nabla\cdot\gamma\nabla$, first  appeared on studying the Electrical Impedance Tomography (EIT), where one uses static voltage and current measurements at the boundary of an object to know about its internal conductivity $\gamma$. If we assume the conductivity is isotropic and regular, then we do recover it from the voltage current measurements at the boundary. See the seminal work of Sylvester and Uhlmann \cite{SU} in dimension three and higher, and Nachmann \cite{NA} for the two dimensional case. If the conductivity is anisotropic, then the unique recovery assertion fails \cite{SUa}. The inverse boundary value problem for magnetic Schr\"odinger operator $(D+A)^2+q$ that on the question of  recovering the vector field $A$ appearing as a first order perturbation of the Laplacian operator, \cite{SUN,DOS} show that the recovery of the magnetic field $A$ along with unique recovery of scalar potential $q$ can be achieved up to its natural gauge invariance, $A=A+\nabla\varphi$ for $\varphi$ scalar.

A generalization of Calder\'{o}n type inverse problems for higher order (order $>2$) operators gets its due attention, and in this direction one seeks to recover lower order perturbations of a higher order operator from the boundary DN map. The work of \cite{KRU1, KRU2} first offer such study, and establish the complete recovery of the first (vector field) and zeroth order (scalar potential) coefficients of perturbed higher order elliptic operator of order $2m$:
\begin{equation}\label{l1} \Lc(x,D) = (-\Delta)^m  + \sum_{j=1}^{n} B_{j}(x) D_{j} + q(x), \quad m\geq 2.\end{equation}
Note that, the complete recovery of the first order perturbation fails for $m=1$ case. Following that, subsequent variations of the problems has been addressed, see \cite{IKE,TS,YANG,AYY,AY,KU,YK} to mention. However, all these works remained considering up to first order perturbations (cf. \eqref{l1}) of the biharmonic or polyharmonic operators. 

One interesting aspect remained for example by allowing up to the second order perturbation of a biharmonic or polyharmonic operator, and seeking the determination of all the coefficients. Previously, \cite{VT} carries out one such exercise to establish that if the second order perturbation is governed by an isotropic matrix i.e. $A_{jk}=a(x)\delta_{jk}$ for some scalar function $a(x)$ in 
\begin{equation}\label{l2}
	\Lc(x,D) = (-\Delta)^m + a(x)(-\Delta) + \sum_{j=1}^{n} B_{j}(x) D_{j} + q(x), \quad m\geq2,
\end{equation}
 then $a$ can be determined along with the $B$, $q$ by knowing the boundary DN map. Then in \cite{BG19} we extend this result by showing that it is possible to determine any symmetric matrix $A$ from the perturbed polyharmonic operator:
 \begin{equation}\label{l3} \Lc(x,D) = (-\Delta)^m + \sum_{j,k=1}^{n} A_{jk}(x) D_{j}D_{k} + \sum_{j=1}^{n} B_{j}(x) D_{j} + q(x), \quad m>2.
 \end{equation}
 Clearly, the compromise we made there in order to recover anisotropic matrices $A$, we sought $A_{jk}D_jD_k$ as the second order perturbation of the polyharmonic operator of order at least $6$ ($m=3$).     So the border line case for the perturbed biharmonic operator ($m=2$) remained open. In this article, we settle that part by establishing the recovery of the anisotropic matrix $A$ of the perturbed biharmonic operator ($m=2$) (cf. \eqref{operator}) from the knowledge of its boundary DN map.  
  The presence of the anisotropic matrix $A$ in $\Lc(x,D)$ (cf. \eqref{operator}) brings a number of challenges, we will get into that in the following sections. 

 The higher order elliptic operators as in \eqref{operator} arise in the areas of physics and geometry, such as the study of the Kirchoff plate equation (perturbed biharmonic operator) in the theory of elasticity, buckling problem and the study of the Paneitz-Branson operator in conformal geometry, for more details see \cite{GGS, Ashbaugh2004}. For more on the elasticity model and perturbed biharmonic operators see \cite{Selvadurai2000,NU,Luis-Campos}.
 A related study of unique continuation for Kirchoff-Love plate equation or in general fourth-order elliptic equation has its own appeal, we refer \cite{CFGC,CFKH,LNW,SHTA} and reference therein. Let us also mention the recent work of \cite{ARV} which studies the boundary unique continuation results for the Kirchoff-Love plate equation. The main method in both unique continuation and inverse problems remain to perform Carleman method of estimations, which we have discussed in Section \ref{Sec_amplitude}. We end our discussion here with mentioning this recent publication \cite{LJRL} for a detailed understanding and wide references on the related topics of biharmonic operator.

\subsection{Direct Problem}
Let $\Omega\subset\Rb^{n},$ $n\geq 3$ be a bounded domain with smooth connected boundary. 
Recall the operator $\Lc(x,D)$ given as \eqref{operator}, defined in $\O$.
Consider the domain of this operator to be 
\[
\Dc(\Lc(x,D)) = \Big{\{}u\in H^{4}(\Omega); u|_{\partial\Omega}=0=(-\Delta)u|_{\partial\Omega} \Big{\}}.
\]
The operator $\Lc(x,D)$ in the domain $\Dc(\Lc(x,D))$ is an unbounded closed operator with a purely discrete spectrum \cite{Gerd_Grubb}.
We make the assumption that $0$ is not an eigenvalue of the operator $\Lc(x,D) : \Dc(\Lc(x,D)) \to L^2(\O)$. Let us denote
\[ \gamma u = \Big{(} u|_{\partial\Omega}, (-\Delta)u|_{\partial\Omega} \Big{)},\]
then for any $f=(f_0,f_1) \in H^{\frac{7}{2}}(\partial\Omega)\times H^{\frac{3}{2}}(\partial\Omega)$, the boundary value problem, 
\begin{equation}\begin{aligned}\label{problem}
\begin{cases}
\Lc(x,D)u &= 0\quad \mbox{ in }\Omega, \\
\gamma u &= f \quad \mbox{ on }\partial\Omega,
\end{cases}
\end{aligned}
\end{equation}
has a unique solution $u_f \in H^{4}(\Omega)$.

Let us define the corresponding Neumann trace operator $\gamma^{\#}$ by
\[
\gamma^{\#} u = \Big{(}\partial_{\nu}u|_{\partial\Omega}, \partial_{\nu}(-\Delta)u|_{\partial\Omega}\Big{)},
\]
where $\nu$ is the outward unit normal to the boundary $\partial\Omega$.
The Dirichlet to Neumann map (DN map) corresponding to the operator $\Lc(x,D)$ is given as
\begin{equation}\label{dnmap}\begin{aligned}
&\Nc : H^{\frac{7}{2}}(\partial\Omega)\times H^{\frac{3}{2}}(\partial\Omega) \to H^{\frac{5}{2}}(\partial\Omega)\times H^{\frac{1}{2}}(\partial\Omega),\\
&\Nc(f) = \gamma^{\#} u_f = \Big{(}\partial_{\nu}u_f|_{\partial\Omega}, \partial_{\nu}(-\Delta)u_f|_{\partial\Omega}\Big{)},
\end{aligned}
\end{equation}
where $u_f\in H^{4}(\Omega)$ is the unique solution of \eqref{problem}. We also define the Cauchy data set to be the graph of the DN map as
\begin{equation}\label{qw19}
\mathcal{C}^N= \{\Big{(}u|_{\partial\Omega}, (-\Delta)u|_{\partial\Omega}\ ;\  \partial_{\nu}u|_{\partial\Omega}, \partial_{\nu}(-\Delta)u|_{\partial\Omega} \Big{)} \st \Lc(x,D)u=0, \mbox{ in }\Omega \}.
\end{equation}

Before we move into addressing the inverse problem, here we quickly go through this following observation.  
From the wellposedness of the direct problem we readily see that knowing the coefficients $A,B,q$ in $\O$, one can determine the DN map on $\partial\O$. Therefore, we can establish a one sided relation:
\[\mathfrak{T}_{\Lc}: \mbox{ Knowledge of the coefficients in }\O \quad \longrightarrow\quad
\mbox{Dirichlet to Neumann map }\mathcal{N} \mbox{ on }\partial\O. 
\]
Our goal is to address the injectivity of the above mapping. But first let us consider a general biharmonic operator $\mathcal{M}$ with lower order perturbation up to order 3, defined as
\[	\mathcal{M}_{C,A,B,q}(x,D) = (-\Delta)^2 + \sum_{j,k,l=1}^{n} C_{jkl}\frac{\partial^3}{\partial x_j \partial x_k \partial x_l} + \sum_{j,k=1}^{n} A_{jk}\frac{\partial^2}{\partial x_j \partial x_k} + \sum_{j=1}^{n} B_j\frac{\partial}{\partial x_j} + q,
\]
where $C(x)$ is a symmetric 3-tensor, $A(x)$ is a symmetric matrix, $B(x)$ is a vector field and $q(x)$ is a function and $C,A,B,q$ are smooth in $\overline{\Omega}$.
Let $u \in H^4(\Omega)$ be a solution of $\mathcal{M}_{C,A,B,q} u = 0$ in $\Omega$, then for any $\Phi \in H^{4}_0(\Omega)$ we see that
\begin{equation}\label{invariance_1}
\mathcal{M}_{\mathcal{C},\mathcal{A},\mathcal{B},\mathcal{Q}} \left(ue^{\Phi}\right) = 0, \qquad \mbox{in }\Omega,
\end{equation}
where $\Wc,\Ac,\Bc,\Qc$ are a set of $C^{\infty}(\overline{\Omega})$ coefficients given as
\begin{equation}\label{Gauge_1}
\begin{aligned}
	\mathcal{C} =& C - \nabla\Phi\otimes I,\\
	\mathcal{A} =& A - 4\left(\nabla\Phi\otimes\nabla\Phi\right) - 4\nabla^2\Phi - \left(|\nabla\Phi|^2 + \Delta\Phi\right)I - 3\langle C,\nabla\Phi \rangle,\\
	\mathcal{B} =& B - 6(\Delta \Phi)\nabla\Phi - 4\nabla(\Delta\Phi) - 8\nabla\Phi\left(\nabla\otimes\nabla\Phi\right) - 2|\nabla \Phi|^2\nabla\Phi - 2(\nabla\Phi A)\\
	&\quad -3\langle C, \nabla^2\Phi\rangle - 3\langle C,\nabla\Phi \otimes\nabla\Phi \rangle,\\
	\mathcal{Q} =& q - (\Delta \Phi)^2 - 2|\nabla\Phi|^2(\Delta\Phi) - 4\nabla\Phi \cdot (\nabla\Delta\Phi) - \Delta^2\Phi - 2(\nabla^2\Phi : \nabla^2\Phi)\\
	&\quad-4\nabla\Phi\left(\nabla\otimes\nabla\Phi\right)\nabla\Phi - |\nabla\Phi|^4 -\langle C,\nabla^3\Phi \rangle - 3\langle C,\nabla^2\Phi\otimes\nabla \Phi \rangle \\
	&\quad -\langle C,\nabla\Phi\otimes \nabla\Phi\otimes \nabla\Phi\rangle - (A:\nabla^2\Phi) - (A\nabla\Phi)\cdot\nabla\Phi - (B\cdot\nabla\Phi).
\end{aligned}
\end{equation}
Note that
\[	\left(ue^{\Phi},\partial_{\nu} (ue^{\Phi})\, ;\, \partial_{\nu}^{2} (ue^{\Phi}), \partial_{\nu}^{3} (ue^{\Phi})\right)|_{\partial\Omega}=\left(u,\partial_{\nu} u\, ;\, \partial_{\nu}^{2}u, \partial_{\nu}^{3} u\right)|_{\partial\Omega},
\]
which implies $\mathcal{M}_{C,A,B,q}$ and $\mathcal{M}_{\mathcal{C},\mathcal{A},\mathcal{B},\mathcal{Q}}$ has the same DN map where $\mathcal{C}, \mathcal{A}, \mathcal{B},\mathcal{Q}$ are as in \eqref{Gauge_1} and $\Phi \in H^4_0(\Omega)$.

This shows that $\mathfrak{T}_{\mathcal{M}}$ is certainly not injective for the operator $\mathcal{M}$. This kind of obstruction towards injectivity exists for the magnetic Schr\"odinger operator as well, see \cite{SUN,DOS}.
Since $\Lc$ is a special case of the operator $\mathcal{M}$, i.e. $\mathcal{M}=\Lc$ when $C=0$ in $\O$, therefore one might doubt about the injectivity of $\mathfrak{T}_{\Lc}$.
	
Now, for the special case if $C = 0$ in $\O$, we get $\mathcal{M}_{0,A,B,q} = \Lc_{A,B,q}(x,D)$ in $\O$. If there is a gauge for the operator $\Lc(x,D)$, then it would mean that there exist an operator $\Lc_{\mathcal{A},\mathcal{B},\mathcal{Q}} = \mathcal{M}_{0,\mathcal{A},\mathcal{B},\mathcal{Q}}$ such that it satisfies the relation in \eqref{Gauge_1} for some $\Phi \in H^4_0(\O)$ such that $\mathcal{C}=0$ in $\O$. Since, we have assumed $C=0$ in $\O$, this means $\nabla\Phi\otimes I=0$ in $\O$ for $\Phi \in H^4_0(\O)$, which implies $\Phi=0$ in $\O$. So the absence of the third order perturbations in $\Lc(x,D)$ sets up the possibility of the complete recovery of $A$, $B$ and $q$ in $\O$ from the knowledge of the DN map.

\subsection{Inverse problem}
The \textit{inverse problem} we investigate here is, does the DN map $\mathcal{N}$ determines the unknown coefficients of $\Lc(x,D)$, namely the symmetric matrix $A$ along with the vector field $B$ and the potential function $q$ in $\O$?

In this article we provide an affirmative answer to this question. Let $\wt{A}\in W^{3,\infty}(\Omega)$ be a symmetric matrix, $\wt{B} \in W^{2,\infty}(\Omega)$ be a vector field and $q\in L^{\infty}(\Omega)$ be a function and write $\wt{\Lc}(x,D) = \Lc_{\wt{A},\wt{B},\wt{q}}(x,D)$ defined in $\Omega$. Let $\wt{\mathcal{N}}$ be the DN map corresponding to the operator $\wt{\Lc}(x,D)$. 

Let $\Ec'(\overline{\Omega})$ be the dual of $\Ec(\overline{\Omega}) = C^{\infty}(\overline{\Omega})$. In particular, $\Ec'(\overline{\Omega})$ is the space of all compactly supported distributions in $\overline{\Omega}$.
We state our main result here.
\begin{theorem}\label{mainresult}
	Let $\Omega\subset \mathbb{R}^n, n\geq 3 $ be a bounded domain with smooth connected boundary.
	Let $\Lc(x,D)$ and $\wt{\Lc}(x,D)$ be two operators defined as in \eqref{operator} with the coefficients $A,\wt{A} \in W^{3,\infty}(\Rb^n;\Cb^{n^2}) \cap \mathcal{E}^{\prime}(\overline{\Omega})$; $B,\wt{B} \in W^{2,\infty}(\Rb^n;\Cb^{n}) \cap \mathcal{E}^{\prime}(\overline{\Omega})$ and $q,\widetilde{q} \in L^{\infty}(\Omega;\Cb)$.
	Assume that $0$ is not an eigenvalue for $\Lc(x,D), \wt{\Lc}(x,D)$ in $\Dc(\Lc(x,D))$ and $\Dc(\wt{\Lc}(x,D))$ respectively.
	If 
	\begin{equation*}\label{Neumann Data}
		\begin{aligned}
			&\Nc(f)|_{\partial\Omega} = \widetilde{\Nc}(f)|_{\partial\Omega} \quad \mbox{ for all } f \in H^{\frac{7}{2}}(\partial\Omega)\times H^{\frac{3}{2}}(\partial\Omega),
		\end{aligned}
	\end{equation*}
	then 
	\[
	A=\widetilde{A},\quad B=\wt{B} \quad\mbox{and}\quad q = \wt{q}, \quad \mbox{in }\Omega.
	\]
\end{theorem}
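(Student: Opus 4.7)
The plan is to follow the standard Calderón-type strategy for higher-order operators: derive an integral identity from the hypothesis $\Nc = \wt\Nc$, substitute appropriate Complex Geometric Optics (CGO) solutions built by a Carleman method, and extract the unknown coefficients order by order in an asymptotic parameter $\tau \to \infty$.

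First, for any admissible boundary data $f$, let $u_f$ and $\wt{u}_f$ be the solutions to $\Lc u = 0$ and $\wt{\Lc}u = 0$ with $\gamma u = f$. The equality $\Nc(f) = \wt\Nc(f)$ forces $w := u_f - \wt{u}_f \in H^4_0(\O)$, and since $\wt{\Lc}w = (\wt{\Lc} - \Lc)u_f$, pairing with any $v$ solving $\wt{\Lc}^* v = 0$ and integrating by parts kills all boundary terms and yields the basic integral identity
\beq
\int_\O \Big[(\wt A - A)_{jk}\, D_j D_k u + (\wt B - B)_j\, D_j u + (\wt q - q)\, u\Big]\,\overline{v}\, dx = 0
\eeq
for every $u$ solving $\Lc u = 0$ and every $v$ solving $\wt{\Lc}^* v = 0$ in $\O$.

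Next I would construct CGO solutions $u = e^{\tau\hat\zeta_1\cdot x}(a_1 + r_1)$ and $v = e^{\tau\hat\zeta_2\cdot x}(a_2 + r_2)$ with complex null directions $\hat\zeta_j\cdot\hat\zeta_j = 0$, chosen so that $\tau(\hat\zeta_1 + \overline{\hat\zeta_2}) = -ik$ for a prescribed $k\in\Rb^n$ (achievable for $n\geq 3$). The essential new difficulty, compared with the case $m > 2$ treated in \cite{BG19}, becomes visible on conjugating $\Lc$ by $e^{\tau\hat\zeta_1\cdot x}$: the term $4\tau^2(\hat\zeta_1\cdot\nabla)^2$ surviving from the biharmonic part (because $\hat\zeta_1\cdot\hat\zeta_1=0$ kills the formally leading $\tau^4$ contribution) sits at exactly the same order $\tau^2$ as the term $-\tau^2 A(\hat\zeta_1,\hat\zeta_1)$ coming from the second-order perturbation. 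Hence the constant amplitude $a_1 \equiv 1$ that sufficed for $m > 2$ is no longer admissible, and $a_1$ must instead solve the transport-type equation
\beq
4(\hat\zeta_1 \cdot \nabla)^2 a_1 - A(\hat\zeta_1,\hat\zeta_1)\, a_1 = 0 \quad \text{in } \O.
\eeq
In a complex coordinate $z$ adapted to $\hat\zeta_1 = \xi + i\eta$ (with $|\xi| = |\eta|$, $\xi \perp \eta$), this is a second-order $\partial_{\bar z}$-type ODE, solvable e.g.\ via the ansatz $a_1 = e^{\phi_1}$ with $\phi_1$ satisfying a Riccati equation. An analogous construction produces $a_2$ for the adjoint $\wt{\Lc}^*$, depending on $\overline{\wt A}$. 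A Carleman estimate tuned to the modified conjugated operator then produces remainders $r_j = o(1)$ in $L^2(\O)$.

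Substituting these CGOs into the integral identity, the $(\wt A - A)$-contribution is of size $\tau^2$, the $(\wt B - B)$-contribution of size $\tau$, and the $(\wt q - q)$-contribution of size $O(1)$. Dividing by $-\tau^2$ and letting $\tau\to\infty$ isolates
\beq
\int_\O e^{-ik\cdot x}\,(\wt A - A)(\hat\zeta_1,\hat\zeta_1)(x)\, a_1(x)\,\overline{a_2(x)}\, dx = 0
\eeq
for every $k\in\Rb^n$ and every admissible null direction $\hat\zeta_1$ perpendicular to $k$. The main obstacle is that the amplitudes $a_1, a_2$ themselves depend on $A$ and $\wt A$, so this identity is not a bare Fourier transform of $(\wt A - A)(\hat\zeta_1,\hat\zeta_1)$; this is precisely what makes the borderline case $m = 2$ significantly harder than $m > 2$. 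I would overcome it by exploiting the freedom left in the $\bar\partial$-type amplitude ODE (through its initial data on a transverse plane) together with the freedom to vary $\hat\zeta_1$ over the family of null directions orthogonal to $k$, and by reducing the resulting identity to the injectivity of a momentum-ray-transform-type operator on symmetric $2$-tensors in the spirit of \cite{BG19}. Once $A = \wt A$ in $\O$ is established, the amplitude equations for $a_1$ and $a_2$ coincide; the next-order identity (of size $\tau$) then yields $B = \wt B$ by a similar but easier Fourier argument for a vector field, and finally the $O(1)$ order gives $q = \wt q$.
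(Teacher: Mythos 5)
Your overall architecture (integral identity, CGO solutions with null phases, the observation that the $\tau^2$ term from $(-\Delta)^2$ and the $\tau^2$ term from $A_{jk}D_jD_k$ collide so that the amplitude must solve $4(\hat\zeta\cdot\nabla)^2a - A(\hat\zeta,\hat\zeta)a=0$) matches the paper. But there are two genuine gaps. The first is the central one: you correctly identify that the limiting identity $\int e^{-ik\cdot x}(\wt A-A)(\hat\zeta,\hat\zeta)\,a_1\overline{a_2}\,dx=0$ is contaminated by the $A$-dependent weights $a_1\overline{a_2}$, but your proposed fix (a Riccati ansatz $a_1=e^{\phi_1}$ plus ``freedom in the initial data'') does not remove that contamination --- an exponential weight $e^{\phi_1}$ built from $A$ still multiplies the integrand, and no reduction to a momentum-ray-transform is available. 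The paper's resolution is to build the amplitudes themselves as CGO solutions in a \emph{second} small parameter $\tau$: $a_0=e^{(\wt\varphi-i\wt\psi)/\tau}(b_0+\rho)$ with $\|\rho\|_{L^2}=\Oc(\tau)$, proved via a Carleman estimate for the two-dimensional $\bar\partial$-type operator (Propositions \ref{CAR}--\ref{ex_le}). Sending $\tau\to 0$ kills the $A$-dependent correction $\rho$ and leaves a \emph{free} amplitude $b_0$ solving the potential-free equation $((\mu_1+i\mu_2)\cdot\nabla)^2b_0=0$, which can then be chosen as $e^{-ix\cdot\xi}$, $(\mu_1\cdot x)e^{-ix\cdot\xi}$, constants, etc. Without this (or an equivalent mechanism) the Fourier/tensor-decomposition analysis cannot start.

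The second gap is the order of recovery. Since $\hat\zeta\cdot\hat\zeta=0$, the leading-order identity is completely blind to the isotropic part of $A-\wt A$: if $A-\wt A=d_{\#}I$ then $(A-\wt A)(\hat\zeta,\hat\zeta)\equiv 0$. So the $\tau^2$ order can only yield $A-\wt A=d_{\#}I$ (after the solenoidal/potential decomposition and the elimination of the Hessian part), not $A=\wt A$. The scalar $d_{\#}$ then reappears at the next order entangled with $B-\wt B$ through the cross term $d_{\#}\,(\mu_1+i\mu_2)\cdot\nabla\wt a_0$, and one must disentangle the two by choosing amplitudes with $(\mu_1+i\mu_2)\cdot\nabla\wt b=0$ (to get $B=\wt B$) and then amplitudes with linear growth (to get $d_{\#}=0$). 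Your plan to establish $A=\wt A$ in full before touching $B$ therefore cannot be executed as stated.
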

%\begin{remark}
%	The regularity of the coefficients $A$, $B$ and $q$ are required for our method of constructing sufficiently regular CGO solutions of $\Lc(x,D)$ and $\wt{\Lc}(x,D)$. We provide an explanation of the regularity assumptions of the coefficients in Remark \ref{reg2}.
%\end{remark}

\subsection*{Dirichlet boundary data}
Let us now consider a different boundary information, for the same problem given in \eqref{problem}. 
Consider \eqref{problem} given with the Dirichlet boundary conditions instead of Navier boundary conditions.
Let us denote 
\[ \gamma_Du =  \Big{(} u|_{\partial\Omega}, \partial_\nu u|_{\partial\Omega} \Big{)}.\]
Then for $f= (f_0,f_1) \in H^{\frac{7}{2}}(\partial\Omega)\times H^{\frac{5}{2}}(\partial\Omega)$ we consider the boundary value problem 
\begin{equation}\begin{aligned}\label{dirichlet}
\mathcal{L}(x,D)u &= 0 \quad\mbox{ in }\Omega, \\
\gamma_Du &= f \quad\mbox{ on }\partial\Omega. 
\end{aligned}
\end{equation}
The corresponding Neumann trace is 
\[\gamma_D^{\#} = \Big{(}\partial_{\nu}^{2}u|_{\partial\Omega}, \partial_{\nu}^{3}u|_{\partial\Omega}\Big{)} \in H^{\frac{3}{2}}(\partial\Omega)\times H^{\frac{1}{2}}(\partial\Omega),\]
where $u\in H^{4}(\Omega)$ is the unique solution to the Dirichlet problem \eqref{dirichlet}. See \cite{AGM,Gerd_Grubb} for the wellposedness of the forward problem \eqref{dirichlet}.
We introduce the set of Cauchy data for the operator $\mathcal{L}(x,D)$ with the Dirichlet 
boundary condition by
\begin{equation*}
\mathcal{C}^D = \{ \Big{(}u|_{\partial\Omega},\partial_{\nu}u|_{\partial\Omega}\ ;\ \partial^{2}_{\nu}u|_{\partial\Omega},\partial^{3}_{\nu}u|_{\partial\Omega}\Big{)} \st \Lc(x,D)u=0 \mbox{ in }\Omega \}.
\end{equation*}
As a corollary of Theorem \ref{mainresult}, we have the following result: 
\begin{corollary}
We assume $A,\widetilde{A}$, $B,\wt{B}$ and $q,\widetilde{q}$
satisfy the same conditions as in Theorem \ref{mainresult}. 
Let $\wt{\mathcal{C}^{D}}$ be the Cauchy data corresponding to the operator $\wt{\Lc}(x,D)$.
Then $\mathcal{C}^D= \widetilde{\mathcal{C}}^D$ on $\partial \O$ implies that $A = {\wt{A}}$, $B=\wt{B}$ and 
$q = \wt{q}$ in $\Omega$.
\end{corollary}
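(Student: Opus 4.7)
The plan is to reduce the corollary directly to Theorem \ref{mainresult} by showing that the Dirichlet Cauchy data set $\mathcal{C}^D$ encodes the Navier Cauchy data set $\mathcal{C}^N$ through a purely boundary-geometric conversion that does not involve the coefficients $A$, $B$, or $q$.

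First I would express the boundary trace of $\Delta u$ in terms of the Dirichlet Cauchy data. Using boundary-adapted coordinates $(x',x_n)$ with $x_n$ the signed distance to $\partial\Omega$ and $\partial_{x_n} = \partial_\nu$ at $x_n=0$, one has the standard identity
\[
\Delta u \bigr|_{\partial\Omega} = \partial_\nu^{2} u \bigr|_{\partial\Omega} + H_{\partial\Omega}\,\partial_\nu u \bigr|_{\partial\Omega} + \Delta_{\partial\Omega}\bigl(u|_{\partial\Omega}\bigr),
\]
where $H_{\partial\Omega}$ is the trace of the second fundamental form of $\partial\Omega$ and $\Delta_{\partial\Omega}$ is the Laplace--Beltrami operator on $\partial\Omega$. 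Differentiating this identity once in the normal direction produces an analogous expression for $\partial_\nu\Delta u|_{\partial\Omega}$ in terms of the full tuple $(u,\partial_\nu u,\partial_\nu^{2} u,\partial_\nu^{3} u)|_{\partial\Omega}$ and intrinsic geometric quantities of $\partial\Omega$. Since these conversions depend only on the common boundary and not on $\Lc$ or $\wt{\Lc}$, the hypothesis $\mathcal{C}^D = \widetilde{\mathcal{C}}^D$ translates immediately into an equality $\mathcal{C}^N = \widetilde{\mathcal{C}}^N$ of Navier Cauchy sets.

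Finally, because $0$ is not an eigenvalue of either $\Lc(x,D)$ or $\wt{\Lc}(x,D)$, the Navier problem \eqref{problem} is uniquely solvable for every $f\in H^{\frac{7}{2}}(\partial\Omega)\times H^{\frac{3}{2}}(\partial\Omega)$, so that the Navier Cauchy set is exactly the graph of the DN map $\Nc$. Thus $\mathcal{C}^N = \widetilde{\mathcal{C}}^N$ is equivalent to $\Nc(f)=\wt{\Nc}(f)$ on $\partial\Omega$ for every admissible $f$, and Theorem \ref{mainresult} then yields $A=\wt{A}$, $B=\wt{B}$, and $q=\wt{q}$ in $\Omega$. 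No substantive obstacle arises here: the only ingredient beyond the main theorem is the elementary boundary expression for $\Delta$, and the remainder is a direct identification of the Cauchy data set with the graph of the Navier DN map.
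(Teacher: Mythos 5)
Your argument is correct, but it is a genuinely different route from the one the paper takes. The paper does not convert Dirichlet Cauchy data into Navier Cauchy data; instead it re-runs the proof of Theorem \ref{mainresult} under Dirichlet boundary conditions: equality of the sets $\mathcal{C}^D$ and $\widetilde{\mathcal{C}}^D$ guarantees that for each solution $\wt{u}$ of $\wt{\Lc}\wt{u}=0$ there is a solution $u$ of $\Lc u=0$ with the same full trace tuple $\left(u,\partial_\nu u,\partial_\nu^2 u,\partial_\nu^3 u\right)\big|_{\partial\Omega}$, hence $u-\wt{u}\in H^4_0$, and the Green-type identity then yields the same integral identity \eqref{integralidentityE}, after which the entire CGO analysis of Section \ref{determination} is repeated verbatim. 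Your approach instead treats Theorem \ref{mainresult} as a black box: you observe that in boundary normal coordinates $\Delta u|_{\partial\Omega}$ and $\partial_\nu\Delta u|_{\partial\Omega}$ are determined from $\left(u,\partial_\nu u,\partial_\nu^2 u,\partial_\nu^3 u\right)\big|_{\partial\Omega}$ by a map involving only the geometry of $\partial\Omega$ (mean curvature, second fundamental form, tangential Laplacian) and not the coefficients, so that $\mathcal{C}^N$ is the image of $\mathcal{C}^D$ under a coefficient-independent transformation; combined with the standing assumption that $0$ is not a Navier eigenvalue, which identifies $\mathcal{C}^N$ with the graph of $\Nc$, this reduces the corollary to the literal hypothesis of Theorem \ref{mainresult}. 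What your route buys is a clean reduction at the level of statements (no need to redo the Carleman/CGO machinery, and no need for a Dirichlet analogue of Proposition \ref{qw20}); what it costs is the routine but nontrivial bookkeeping of the boundary-geometric identities and their mapping properties between the relevant trace spaces, e.g.\ that $\Delta_{\partial\Omega}\bigl(u|_{\partial\Omega}\bigr)\in H^{3/2}(\partial\Omega)$ for $u\in H^4(\Omega)$, and attention to sign conventions for $\partial_\nu$ versus the inward normal coordinate. Both arguments are valid; yours is arguably the more transparent logical reduction, while the paper's is shorter to state given that the whole proof apparatus is already in place.
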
 
\bpr Proceeding in a similar way as of Theorem \ref{mainresult}, in this case we end up with an integral identity same as \eqref{integralidentityE}. 
Then following the same analysis, one can show uniqueness of the lower order perturbations in $\Omega$.\epr

	\subsection*{A brief discussion on the techniques}
	A general approach to solve a Calder\'on type inverse problem follows from the pioneering work of \cite{SU}. By a clever use of integrations by parts formula and the equality of the DN map at the boundary, we obtain integral identities concerning the perturbation coefficients along with solutions of the operator and its adjoint under consideration. For instance, we obtain
	\begin{equation}\label{eq_1}
	\sum_{j,k=1}^{n}\int_{{\Omega}}\lb (A_{jk}-{\wt{A}_{jk}})D^{j}D^{k}\widetilde{u} + (B_{j}-{\wt{B}_{j}})D^{j}\widetilde{u} + (q-\widetilde{q})\widetilde{u}\rb\overline{v}\,\D x
	= 0\end{equation}
	where $\wt{u}$ solves $\Lc_{\wt{A},\wt{B},\wt{q}}(x,D)\wt{u}=0$ and $v$ solves $\Lc^{*}_{A,B,q}(x,D)v=0$ in $\Omega$. 
	
	Next we seek for particular class of solutions for $\wt{u}$ and $v$ which are known as the complex geometric optics (CGO) solutions for the operator $\Lc(x,D)$ and its adjoint $\Lc^{*}(x,D)$ respectively. They consist of a complex phase function and a complex amplitude. The amplitude can be expanded asymptotically as solutions of a series of transport equations. For the summability, we prove a Carleman estimate to bound the tail of the series with desired smallness, see \cite{KSU} for Schr\"{o}dinger operator. In our case, we construct them in Section \ref{Carleman Estimates}.
	
	Note that, the key to make this method work is to produce enough amplitudes, solving the transport equations and thus enough CGO solutions for the operator.
	The main challenge to recover an anisotropic second order perturbation of a biharmonic operator is to construct enough CGO solutions.
	The transport equations we get for the amplitudes are of the form of a second order partial differential operator with a potential term, governed by the unknown anisotropic coefficient $A$:
	\begin{equation}\label{eq_2}
	4((\mu_1+i\mu_2)\cdot\nabla)^2a +A(x)(\mu_1+i\mu_2)\cdot(\mu_1+i\mu_2)a=0,
	\end{equation}
	where $\mu_1,\mu_2\in\mathbb{R}^n$ satisfying $|\mu_1|=|\mu_2|$ and $\mu_1\perp\mu_2$. Note that, when $A$ is an anisotropic matrix then  $A(x)(\mu_1+i\mu_2)\cdot(\mu_1+i\mu_2)$ remains non-zero, however when $A$ is isotropic it remains always zero.
	 In order to provide enough $\wt{u},v$ in \eqref{eq_1}, we need to look for a rich class of solutions $a$ of \eqref{eq_2}.    
	   We construct yet another CGO type solution for the amplitudes $a$ and provide a sufficiently large class solutions. 
	So altogether, we construct CGO solutions, for the biharmonic operator under considerations, having CGO amplitudes.
	To the best of the author's knowledge, construction of \emph{CGO amplitudes} is new in the analysis of the Calder\'on problem. For this construction, we use a Carleman estimate based on the two dimensional $\overline{\partial}$-bar operator, which we prove in the due course.  We dedicate Section \ref{Sec_amplitude} to construct such \emph{CGO amplitudes} and provide a detailed discussion there.
		Finally, in Section \ref{determination}, using the large class of solutions constructed in the previous sections we show that $A=\wt{A}$, $B=\wt{B}$ and $q=\wt{q}$ in $\Omega$.
		\subsection*{Acknowledgement}
		The research of T.G. is supported by the Collaborative Research Center, membership no. 1283, Universit\"{a}t Bielefeld. S.B. is partly supported by Project no.: 16305018 of the Hong Kong Research Grant Council.
%		S.B. is supported by the postdoctoral fellowship from the \textit{Institute for Advanced Studies, The Hong Kong University of Science and Technology}.    
\section{Carleman estimate and CGO solutions}\label{Carleman Estimates}
In this section we construct complex geometric optics (CGO) type solutions of $\Lc(x,D)$ in \eqref{operator}. 
To construct complex geometric optics solutions we need certain   solvability result for the correction term, with desired decay estimates. We use the method of Carleman estimates to derive suitable weighted estimates for the operator $\Lc(x,D)$ and its formal $L^{2}(\Omega)$ adjoint $\Lc^{*}(x,D)$. Our method is essentially based on the Carleman estimates derived for the conjugated Laplacian operator with a gain of two derivatives \cite{ST}. 

\subsection{Interior Carleman estimates} 
Let us recall $\Lc(x,D)$ as in \eqref{operator}.
Note that $\Lc^{*}(x,D)$, the $L^2(\Omega)$ adjoint of $\Lc(x,D)$, has a similar form as of $\Lc(x,D)$ with possibly different coefficients $A^{\sharp}$, $B^{\sharp}$ and $q^{\sharp}$ (see \eqref{adjoint-operator}).
In this section, we prove an interior Carleman estimate for the conjugated semiclassical version of the operator $\Lc(x,D)$ as well as its adjoint operator. 

First we prove a Carleman estimate for the principal part of the semiclassical version of the operator $\Lc(x,D)$, which is given as $(-h^2\Delta)^2$.
Then by adding lower order terms to it finally we derive the required Carleman estimate for the conjugated semiclassical version of the operator $\Lc(x,D)$.
We start by recalling the definition of a limiting Carleman weight for the semiclassical Laplacian $(-h^2\Delta)$. 
%Let $\Omega\subset\mathbb{R}^n$, $n\geq 3$ be a bounded domain with smooth boundary.
Let $\wt{\O}$ be an open set in $\mathbb{R}^n$ such that 
$\Omega\ssubset \wt{\O}$ and let $\vp\in C^{\infty}(\wt{\O},\mathbb{R})$.
Consider the conjugated, semiclassical Laplacian operator
$P_{0,\vp} = e^{\frac{\vp}{h}}(-h^2\Delta)e^{-\frac{\vp}{h}}$ 
with its semiclassical symbol $p_{0,\vp}(x,\xi)=|\xi|^2-|\nabla_x\vp|^2+2i\xi\cdot\nabla_x\vp$. 

\begin{definition}[\cite{KSU}]
We say that $\vp$ is a limiting Carleman weight for $(-h^2\Delta)$
in $\wt{\O}$ if $\nabla\vp \neq 0 $ in $\wt{\O}$ and the Poisson 
bracket of $\mathrm{\mathrm{Re}}(p_{0,\vp})$ and  $\mathrm{Im}(p_{0,\vp})$ satisfies
\[\Big{\{}\mathrm{Re}(p_{0,\vp}), \mathrm{Im}(p_{0,\vp})\Big{\}}(x,\xi)=0, \quad \mbox{ whenever } p_{0,\vp}(x,\xi)=0, \quad \mbox{for } (x,\xi)\in (\overline{\wt{\O}}\times\mathbb{R}^n).
\]
\end{definition}
Examples of such $\vp$ are the linear weights defined as 
$\vp(x) = \A\cdot x$, where $\A\in\mathbb{R}^n\setminus\{0\}$ or 
the logarithmic weights $\vp(x)= \log|x-x_0|$ with $x_0 \notin \overline{\wt{\O}}$.
Throughout this article we consider the limiting Carleman weight to be of the form $\vp(x)=(\A\cdot x)$ where $\A\in\mathbb{R}^n$ with $|\A|=1$.

As the principal symbol of the semiclassical conjugated biharmonic operator $e^{\frac{\vp}{h}}(-h^2\Delta)^{2}e^{-\frac{\vp}{h}}$ is given by $p_{0,\vp}^2$, which is not of principal type, the idea of Carleman weights for biharmonic operators does not make sense. Instead we work with the limiting Carleman weights for the conjugated semiclassical Laplacian operator.
%One always has the Poisson bracket of $\mathrm{Re}(p^2_{\vp})$ and $\mathrm{Im}(p^2_{\vp})$ 
%is zero when $p^2_{\vp}(x,\xi)=0$, $(x,\xi)\in (\overline{\Omega}\times\mathbb{R}^n)$.
In order to get the Carleman estimate for the biharmonic operators we iterate the Carleman estimate obtained for the semiclassical Laplacian.

We use the semiclassical Sobolev spaces $H^s_{\mathrm{scl}}(\mathbb{R}^n)$ with $s\in\mathbb{R}$, equipped with the norm 
\begin{align*}
\lVert u\rVert_{H^s_{\mathrm{scl}}(\mathbb{R}^n)} = \lVert {\langle hD\rangle}^su\rVert_{L^2(\mathbb{R}^n)},
\end{align*} where 
$\langle \xi \rangle = (1+|\xi|^2)^{\frac{1}{2}}$. For a bounded domain $\Omega\subset \Rb^n$ with Lipschitz boundary, we define the semiclassical Sobolev space $H^s_{scl}(\Omega)$ as the restriction of $H^s_{scl}(\Rb^n)$ in $\Omega$ with the norm as 
\[	||u||_{H^s_{scl}(\Omega)} := \inf_{\substack{v \in H^s_{scl}(\mathbb{R}^n), \\ v|_{\Omega}=u}} \;||v||_{H^s_{scl}(\mathbb{R}^n)}.\]
%
%Let $h>0$ be a small parameter, we define the semiclassical Fourier transform $\mathcal{F}_h$ on the Schwartz space $\mathcal{S}(\Rb^2)$ as
%\[	\hat{f}_h(\xi) = \mathcal{F}_h f(\xi) = \frac{1}{2\pi h}\int_{\Rb^2} e^{-\frac{i}{h}(x\cdot \xi)} f(x)\D x, \qquad \mbox{for all }f\in \mathcal{S}(\Rb^2). 
%\]
%We define the semiclassical Sobolev space $H^s_{scl}(\Rb^2)$, $s \in \Rb$, via the semiclassical Fourier transform as
%\[	H^s_{scl}(\Rb^2) := \{ v \in L^2(\Rb^2): (1+|\xi|^2)^{\frac{s}{2}}\hat{v}_h(\xi) \in L^2(\Rb^2) \}.
%\]
%For a bounded domain $\Omega\subset \Rb^2$ with Lipschitz boundary, we define the semiclassical Sobolev space $H^s_{scl}(\Omega)$ as the restriction of $H^s_{scl}(\Rb^2)$ in $\Omega$.
For $s=m$ a positive integer we get
\[	\lVert u \rVert^2_{H^m_{scl}(\Omega)} \simeq \sum_{|\A|\leq m} \lVert (hD)^\A v \rVert^2_{L^2(\Omega)},
\]
where $\simeq$ denotes equivalence in the two norms on both sides of the above relation.
We define $H^s_{scl,0}(\O)$ to be closure of the $C^{\infty}_0(\O)$ in $H^s_{scl}(\O)$ for any $s>0$. For $s<0$ one can realize $H^{s}_{scl}(\O)$ to be the dual of the space $H^{-s}_{scl,0}(\O)$.

With these notations we now prove the following proposition.
\begin{proposition}\label{Prop: Interior Carleman Estimate}
	Let $A \in W^{2,\infty}(\Omega,\mathbb{C}^{n^{2}})$, $B \in W^{1,\infty}(\Omega,\mathbb{C}^{n})$, $q\in L^{\infty}(\O,\Cb)$
	and $\vp$ be a limiting Carleman weight for the semiclassical Laplacian on $\wt{\O}$. Then for $0< h\ll 1 $ and $-4\leq s\leq 0$, we have
	\begin{equation}\label{carlemanestimate3}
		h^{2}\lVert u\rVert_{H^{s+4}_{\mathrm{scl}}} \leq C \lVert h^4e^{\frac{\vp}{h}}\Lc(x,D)e^{-\frac{\vp}{h}}u\rVert_{H^{s}_{\mathrm{scl}}},\qquad \mbox{for all } u\in C^{\infty}_0(\Omega).
	\end{equation}
	the constant $C = C_{s,\O, A,B,q}$ is independent of $h>0$. 
\end{proposition}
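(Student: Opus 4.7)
My plan is to derive the Carleman estimate for the full perturbed biharmonic operator in two stages: first establish it for the principal part $(-h^2\Delta)^2$ by iterating the Laplacian Carleman estimate from \cite{ST}, then absorb the lower-order perturbations. First I invoke the known Carleman estimate with gain of two derivatives for the conjugated semiclassical Laplacian $P_\vp := e^{\vp/h}(-h^2\Delta)e^{-\vp/h}$: for $u \in C^\infty_0(\O)$ and $s$ in the relevant range,
\beq
h\|u\|_{H^{s+2}_{\mathrm{scl}}} \le C\|P_\vp u\|_{H^s_{\mathrm{scl}}}.
\eeq
The key observation enabling iteration is that $P_\vp$ preserves $C^\infty_0(\O)$, since multiplication by the smooth functions $e^{\pm\vp/h}$ and the Laplacian both preserve smoothness and compact support. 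Applying this estimate first to $u$ at level $s+2$ and then to $P_\vp u \in C^\infty_0(\O)$ at level $s$ chains to give the biharmonic Carleman estimate for the principal part:
\beq
h^2 \|u\|_{H^{s+4}_{\mathrm{scl}}} \le C h\|P_\vp u\|_{H^{s+2}_{\mathrm{scl}}} \le C^2\|P_\vp^2 u\|_{H^s_{\mathrm{scl}}}.
\eeq

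Next I decompose the full conjugated operator using the identity $e^{\vp/h}(hD_j)e^{-\vp/h} = hD_j + i\partial_j\vp$:
\beq
h^4 e^{\vp/h}\Lc(x,D)e^{-\vp/h} = P_\vp^2 + h^2 \wt{A}_\vp + h^3 \wt{B}_\vp + h^4 q,
\eeq
where $\wt{A}_\vp = \sum_{j,k} A_{jk}(hD_j + i\partial_j\vp)(hD_k + i\partial_k\vp)$ and $\wt{B}_\vp = \sum_j B_j(hD_j + i\partial_j\vp)$ are semiclassical differential operators of orders $2$ and $1$. The hypotheses $A \in W^{2,\infty}$, $B \in W^{1,\infty}$, $q \in L^\infty$ together with standard semiclassical multiplication estimates yield
\beq
\|\wt{A}_\vp u\|_{H^s_{\mathrm{scl}}} \le C\|u\|_{H^{s+2}_{\mathrm{scl}}},\quad \|\wt{B}_\vp u\|_{H^s_{\mathrm{scl}}} \le C\|u\|_{H^{s+1}_{\mathrm{scl}}},\quad \|qu\|_{H^s_{\mathrm{scl}}} \le C\|u\|_{H^s_{\mathrm{scl}}}.
\eeq

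Substituting $P_\vp^2 u = h^4 e^{\vp/h}\Lc e^{-\vp/h}u - (h^2\wt{A}_\vp + h^3\wt{B}_\vp + h^4 q)u$ into the principal-part estimate then gives
\beq
h^2\|u\|_{H^{s+4}_{\mathrm{scl}}} \le C\|h^4 e^{\vp/h}\Lc e^{-\vp/h}u\|_{H^s_{\mathrm{scl}}} + Ch^2\|u\|_{H^{s+2}_{\mathrm{scl}}} + Ch^3\|u\|_{H^{s+1}_{\mathrm{scl}}} + Ch^4\|u\|_{H^s_{\mathrm{scl}}}.
\eeq
The $h^3$ and $h^4$ terms are strictly lower order in $h$ (using $\|u\|_{H^{s+j}_{\mathrm{scl}}} \le \|u\|_{H^{s+4}_{\mathrm{scl}}}$) and are absorbed into the left-hand side by taking $h$ small. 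I expect the main obstacle to be the borderline second-order term $Ch^2\|u\|_{H^{s+2}_{\mathrm{scl}}}$, which shares the same $h^2$ scaling as the left-hand side---precisely the critical scaling that makes the anisotropic second-order perturbation so delicate for the biharmonic operator, as emphasised in the introduction. To absorb it, I plan to iterate the Laplacian Carleman estimate twice more to obtain $h^2\|u\|_{H^{s+2}_{\mathrm{scl}}} \le C\|P_\vp^2 u\|_{H^s_{\mathrm{scl}}}$, converting the borderline term (via the decomposition identity) into a contribution controlled by $C\|h^4 e^{\vp/h}\Lc e^{-\vp/h}u\|_{H^s_{\mathrm{scl}}}$ plus genuine higher-order-in-$h$ remainders, and then close the argument by taking $h$ sufficiently small relative to $\|A\|_{W^{2,\infty}}$, $\|B\|_{W^{1,\infty}}$ and $\|q\|_{L^\infty}$; this yields the stated estimate with a constant $C$ depending on $s$, $\O$, and the perturbation coefficients.
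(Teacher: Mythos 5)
Your overall architecture (iterate the Laplacian Carleman estimate of \cite{ST} to handle $(-h^2\Delta)^2$, then treat the perturbations as errors) matches the paper's, and you correctly single out the borderline term $Ch^2\lVert u\rVert_{H^{s+2}_{\mathrm{scl}}}$ coming from the second-order perturbation as the crux. However, your proposed mechanism for absorbing it does not close, and this is a genuine gap. With the plain linear weight $\vp$ the iterated estimate reads $h^2\lVert u\rVert_{H^{s+4}_{\mathrm{scl}}}\leq C\lVert P_\vp^2u\rVert_{H^s_{\mathrm{scl}}}$, and the dominant part of $h^2\wt{A}_\vp u$ is the multiplication term $h^2\sum_{j,k} A_{jk}(\partial_j\vp)(\partial_k\vp)u$, whose $H^s_{\mathrm{scl}}$ norm is bounded below no better than by $\mathcal{O}(1)\,h^2\lVert u\rVert_{H^{s+4}_{\mathrm{scl}}}$ with a constant proportional to $\lVert A\rVert_{L^\infty}$ times the Carleman constant $C$: there is no spare power of $h$, so taking $h$ small does nothing, and absorption into the left-hand side would require this product to be $<1$, which you cannot arrange. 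Your fallback --- substituting $h^2\lVert u\rVert_{H^{s+2}_{\mathrm{scl}}}\leq C\lVert P_\vp^2u\rVert_{H^s_{\mathrm{scl}}}$ and re-expanding $P_\vp^2$ via the decomposition --- is circular: it reproduces the same borderline term with coefficient $C^3$ in place of $C$, so the iteration diverges rather than converges.

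The paper's fix is to run the entire argument with the convexified weight $\vp_\ve=\vp+\frac{h}{2\ve}\vp^2$, for which the estimate of \cite{ST} gives the quantitatively stronger gain $\frac{h}{\sqrt{\ve}}\lVert u\rVert_{H^{s+2}_{\mathrm{scl}}}\leq C\lVert e^{\vp_\ve/h}(-h^2\Delta)e^{-\vp_\ve/h}u\rVert_{H^s_{\mathrm{scl}}}$. Iterating twice puts $\frac{h^2}{\ve}\lVert u\rVert_{H^{s+4}_{\mathrm{scl}}}$ on the left, and the $\mathcal{O}(1)h^2\lVert u\rVert_{H^{s+4}_{\mathrm{scl}}}$ contribution of the second-order term is then absorbed by first fixing $\ve$ small (depending on $A$ but not on $h$) and afterwards taking $h\ll\ve$; one returns to the original weight $\vp$ at the end using $1\leq e^{\vp^2/(2\ve)}\leq C$. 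Without the convexification, or some equivalent source of an extra small parameter independent of $h$, the estimate \eqref{carlemanestimate3} does not follow from the argument you outline.
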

\begin{proof}
Let us consider the convexified Carleman weight (see \cite{KSU}) defined as 
		\[
		\vp_{\ve}=\vp+ \frac{h}{2\ve} \vp^{2} \mbox{ on } \wt{\O}.
		\]
We begin with the Carleman estimate for the semiclassical Laplacian with a gain of two derivatives proved in \cite{ST}:
	\begin{equation}\label{carlemanesti}
	\frac{h}{\sqrt{\epsilon}}\lVert u\rVert_{H^{s+2}_{\mathrm{scl}}} \leq C \LM e^{\frac{\vp_{\ve}}{h}}(-h^2\Delta) e^{-\frac{\vp_{\ve}}{h}}u\RM_{H^s_{\mathrm{scl}}}, \quad \mbox{for all } u\in C^{\infty}_0(\Omega), \quad s \in \Rb.
	\end{equation}
Let $-4\leq s\leq 0$, then iterating the estimate \eqref{carlemanesti} for $2$ times, we get the following estimate: 
\begin{equation}\label{carlemanestimate22}
	\left(\frac{h}{\sqrt{\epsilon}}\right)^2\lVert u\rVert_{H^{s+4}_{\mathrm{scl}}} \leq C \LM e^{\frac{\vp_{\ve}}{h}}(-h^2\Delta)^2 e^{-\frac{\vp_{\ve}}{h}}u\RM_{H^s_{\mathrm{scl}}}, \quad \mbox{for all } u\in C^{\infty}_0(\Omega).
\end{equation}
%Let us consider $s \in [-4,0]$ and we get
%\begin{equation}
%	\left(\frac{h}{\sqrt{\epsilon}}\right)^2\lVert u\rVert_{H^{s+4}_{\mathrm{scl}}} \leq C \LM e^{\frac{\vp_{\ve}}{h}}(-h^2\Delta)^2 e^{-\frac{\vp_{\ve}}{h}}u\RM_{H^s_{\mathrm{scl}}}, \mbox{ for all } u\in C^{\infty}_0(\Omega).
%	\end{equation}

Next to get the required Carleman estimate for $\mathcal{L}(x,D)$, we add the lower order perturbations (given in \eqref{operator}) to the above estimate. 
We first take the zero-th order term $(h^{4}q)$, where $q\in L^{\infty}(\Omega,\mathbb{C})$, and we get 
\[
\lVert h^{4} qu\rVert _{H^{s}_{\mathrm{scl}}} \leq h^{4}\lVert q\rVert_{L^{\infty}} \lVert u\rVert_{L^2} \leq h^{4} \lVert q\rVert_{L^{\infty}}\lVert u\rVert_{H^{s+4}_{\mathrm{scl}}}, \quad -4\leq s\leq 0.
\]
Next we consider the first order term $h^{4}(B\cdot D)$, where $B \in W^{1,\infty}(\Omega,\Cb^n)$. 
We observe
\begin{equation}\label{First order term}
h^{3}e^{\frac{\vp_{\ve}}{h}}\sum\limits_{j=1}^{n} B_{j}(hD)^{j} e^{-\frac{\vp_{\ve}}{h}}u
= h^{3}\sum\limits_{j=1}^{n} B_{j}(-D^{j}\vp_{\ve}+ hD^{j})u.
\end{equation}
The first term in the right hand side of \eqref{First order term} can be estimated as 
\[	\lVert(B\cdot D\vp_{\epsilon})u\rVert_{H^{s}_{\mathrm{scl}}} \leq \lVert B\cdot D\vp_{\epsilon}\rVert_{L^{\infty}}\lVert u\rVert_{H^{s+4}_{\mathrm{scl}}},\quad -4\leq s\leq 0.
\]
Now as $\vp_{\epsilon} = \vp + \frac{h}{\epsilon}\vp^2 $ and $0 < h \ll \epsilon \ll 1$, that is, $0 < \frac{h}{\epsilon} < 1 $. Hence $ \lVert D^{\A} \vp_{\epsilon}\rVert_{L^{\infty}} = \Oc(1)$ for any $\A$ and consequently we get
\[ \lVert(B\cdot D\vp_{\epsilon})u\rVert_{H^{s}_{\mathrm{scl}}} \leq \mathcal{O}(1)\lVert u\rVert_{H^{s+4}_{\mathrm{scl}}}.\]
For the second term in the right hand side of \eqref{First order term} we observe that for $-4\leq s\leq 0$ we have
\begin{equation*}\begin{aligned}
\lVert B \cdot (hD)u\rVert_{H^{s}_{\mathrm{scl}}} &\leq \lVert hD \cdot(Bu)\rVert_{H^{s}_{\mathrm{scl}}} + h \lVert (D \cdot B)u\rVert_{H^{s}_{\mathrm{scl}}}\\
&\leq \Oc(1)\lVert Bu\rVert_{H^{s+1}_{\mathrm{scl}}} + \Oc(h)\lVert u\rVert_{H^{s+4}_{\mathrm{scl}}}\\ 
&\leq \Oc(1)\lVert u\rVert_{H^{s+4}_{\mathrm{scl}}}.
\end{aligned}\end{equation*}
The last inequality follows from the fact that the operator given as multiplication by $B$ is continuous from $H^{s+4}_{\mathrm{scl}}$ to $H^{s+1}_{\mathrm{scl}}$ where $B\in W^{1,\infty}$. To prove the continuity, it suffices to consider the complex interpolation for the cases $s = 0$ and $s = -4$.
Hence, we have 
\begin{equation}\label{newline1}
\lVert h^{3}e^{\frac{\vp_{\epsilon}}{h}}(B\cdot hD) e^{-\frac{\vp_{\epsilon}}{h}}u\rVert_{H^s_{\mathrm{scl}}} \leq \Oc(h^3)\lVert u\rVert_{H^{s+4}_{\mathrm{scl}}}, \quad -4\leq s\leq 0.   
\end{equation}
	
Now consider the term $h^{4}A_{\A}D^{\A}e^{\varphi_{\epsilon}/h}u$, for $\lvert \A \rvert = 2$, where $A$ is a symmetric matrix. We have
\begin{equation}\label{2nd order term}
\begin{aligned}
&h^{2}e^{\frac{\vp_{\ve}}{h}}\sum\limits_{\lvert\A\rvert=2}A_{\A}h^2D^{\A} e^{-\frac{\vp_{\ve}}{h}}u
%=h^{2}e^{\frac{\vp_{\ve}}{h}}\sum\limits_{\substack{\A=\A_1+\A_2,\\\lvert\A_1\rvert=\lvert\A_2\rvert=1}}A_{\A}h^2D^{\A} e^{-\frac{\vp_{\ve}}{h}}u\notag\\
= h^{2} \sum_{j,k=1}^{n} A_{jk}(D^{j}\vp_{\ve}D^{k}\vp_{\ve}-hD^{j}D^{k}\vp_{\ve} + 2hD^{j}\vp_{\ve}D^{k} + h^2 D^{j}D^{k})u.
\end{aligned}
\end{equation}
For the first two terms in the right hand side of \eqref{2nd order term}, we get
\[\begin{aligned}
\lVert A_{jk} \lb D^{j}\vp_{\ve}D^{k}\vp_{\ve} -hD^{j}D^{k}\vp_{\ve}\rb u \rVert_{H^s_{\mathrm{scl}}}
&\leq C\lVert A_{jk} \lb D^{j}\vp_{\ve}D^{k}\vp_{\ve} -hD^{j}D^{k}\vp_{\ve}\rb \rVert_{L^{\infty}}\lVert u\rVert_{H^s_{\mathrm{scl}}}\\
&\leq \mathcal{O}(1)\lVert u\rVert_{H^{s+4}_{\mathrm{scl}}}, \qquad -4\leq s\leq 0. 
\end{aligned}\]
Analysing the third term in \eqref{2nd order term}, we see
\[\begin{aligned}
\lVert A_{jk} D^{j}\vp_{\ve}hD^{k}u \rVert_{H^s_{\mathrm{scl}}}
\leq& C\lVert hD^{k}\left(A_{jk} u D^{j}\vp_{\ve}\right) \rVert_{H^{s}_{\mathrm{scl}}} 
	+ Ch\lVert D^{k}\left(A_{jk}D^{j}\vp_{\ve}\right) \rVert_{L^{\infty}}\lVert u\rVert_{H^{s+4}_{\mathrm{scl}}} \\
\leq& \mathcal{O}(1)\lVert \left(A_{jk} D^{j}\vp_{\ve}\right) u\rVert_{H^{s+1}_{\mathrm{scl}}}
	+ Ch\lVert u\rVert_{H^{s+4}_{\mathrm{scl}}}\\
\leq& \mathcal{O}(1)\lVert u\rVert_{H^{s+4}_{\textrm{scl}}}, \quad -4\leq s\leq 0,
\end{aligned}\]
where for the first term we use the continuity of the  multiplication operator $A_{jk} : H^{s+4}_{\mathrm{scl}}\to H^{s+1}_{\mathrm{scl}}$ whenever $A_{jk}\in W^{2,\infty}$.

Now, consider the last term of the expression on the right hand side of \eqref{2nd order term}, we get 
\[\begin{aligned}
\lVert A_{jk}h^2D^{j}D^{k}u \rVert_{H^s_{\mathrm{scl}}} 
&\leq\lVert h^2D^{j}D^{k}(A_{jk} u) \rVert_{H^s_{\mathrm{scl}}} 
+ 2\lVert h^2D^{j}(A_{jk}) D^{k}u \rVert_{H^s_{\mathrm{scl}}} 
+\lVert h^2\left(D^{j}D^{k}(A_{jk})\right) u \rVert_{H^{s}_{\mathrm{scl}}}\\
&\leq \mathcal{O}(1)\lVert A_{jk} u \rVert_{H^{s+2}_{\mathrm{scl}}} + \mathcal{O}(h)\lVert u \rVert_{H^{s+4}_{\mathrm{scl}}} + \mathcal{O}(h^2)\lVert u \rVert_{H^{s+4}_{\mathrm{scl}}}\\
&\leq \mathcal{O}(1)\lVert  u \rVert_{H^{s+4}_{\mathrm{scl}}} + \mathcal{O}(h)\lVert u \rVert_{H^{s+4}_{\mathrm{scl}}} + \mathcal{O}(h^2)\lVert u \rVert_{H^{s+4}_{\mathrm{scl}}}, \quad -4\leq s\leq 0.
\end{aligned}\]
Here in the first term we use the continuity of the multiplication operator $A_{jk} : H^{s+4}_{\mathrm{scl}}\to H^{s+2}_{\mathrm{scl}}$ whenever $A_{jk}\in W^{2,\infty}$. 
%To prove the continuity, it suffices to consider the complex interpolation for the cases $s = 0$ and $s = -4$.
The inequality on the second term on the right hand side follows from using the continuity of the  multiplication operator $A_{jk} : H^{s+4}_{\mathrm{scl}}\to H^{s+1}_{\mathrm{scl}}$ whenever $A_{jk}\in W^{2,\infty}$.  

Adding all the lower order terms in \eqref{carlemanestimate22}, choosing $h\ll \epsilon \ll 1$ small enough and using the standard bounds i.e. 
$1\leq e^{\frac{\vp^2}{2\epsilon}}\leq C, \hspace{3pt} \frac{1}{2}\leq  1 + \frac{h}{\epsilon}\vp \leq \frac{3}{2}$, 
we finally get our desired estimate \eqref{carlemanestimate3}.
\end{proof}

Let us denote 
\[
\Lc_{\vp}(x,D) = h^{4}e^{\frac{\vp}{h}}\Lc(x,D)e^{-\frac{\vp}{h}}.
\]
The formal $L^{2}$ adjoint of $\Lc_{\vp}(x,D)$ would be ${\Lc}^{*}_{\vp}(x,D)=h^{4}e^{-\frac{\vp}{h}}\Lc^{*}(x,D)e^{\frac{\vp}{h}}$, where $\Lc^{*}(x,D)$ is the formal $L^2$-adjoint of the operator $\Lc(x,D)$. 
%\begin{remark}\label{star}If $P =\sum_{|\alpha|\leq m} a_\alpha(x)D^\alpha$
%is a differential operator in $\Omega$, with $a_\alpha\in W^{|\alpha|,\infty}(\Omega)$ then the adjoint operator $P^{*}$ is the differential
%operator which satisfies
%$\langle Pu,v\rangle = \langle u, P^{*}v\rangle$,
%$u, v \in C_c^\infty(\Omega)$, where $\langle u, v\rangle=\int_{\Omega}u\overline{v}$; and
%for $P$ of the above form, an integration by parts shows that
%$P^{*}v =\sum_{|\alpha|\leq m}D^\alpha(a_\alpha(x)v)$.
%\end{remark}

As $\Lc^{*}(x,D)$ has the similar form as $\Lc(x,D)$ and $-\vp$ is a limiting Carleman weight if $\vp$ is, the Carleman estimate derived in Proposition \ref{Prop: Interior Carleman Estimate} also holds for ${\Lc}^{*}_{\vp}(x,D)$ as well.
From \eqref{adjoint-operator} we see that $\Lc_{A,B,q}^*(x,D) = \Lc_{A^{\sharp},B^{\sharp},q^{\sharp}}(x,D)$. Since we consider $A \in W^{3,\infty}(\O)$ and $B \in W^{2,\infty}(\O)$, hence, $A^{\sharp}\in W^{3,\infty}(\O)$, $B^{\sharp} \in W^{2,\infty}(\O)$. That is, we have the following result regarding the Carleman estimate for the adjoint operator.
\begin{corollary}\label{Cor: Interior Carleman Estimate}
	Let $A \in W^{2,\infty}(\Omega,\mathbb{C}^{n^{2}})$, $B \in W^{1,\infty}(\Omega,\mathbb{C}^{n})$, $q\in L^{\infty}(\O,\Cb)$
	and $\vp$ be a limiting Carleman weight for the semiclassical Laplacian on $\wt{\O}$. Then for $0< h\ll 1 $ and $-4\leq s\leq 0$, we have
	\begin{equation}\label{carlemanestimate4}
		h^{2}\lVert u\rVert_{H^{s+4}_{\mathrm{scl}}} \leq C \lVert h^{4}e^{\pm\frac{\vp}{h}}\Lc^*(x,D)e^{\mp\frac{\vp}{h}}u\rVert_{H^{s}_{\mathrm{scl}}},\qquad \mbox{for all } u\in C^{\infty}_0(\Omega).
	\end{equation}
	the constant $C = C_{s,\O, A,B,q}$ is independent of $h>0$. 
\end{corollary}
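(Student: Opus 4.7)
The plan is to reduce the corollary directly to Proposition \ref{Prop: Interior Carleman Estimate}, exploiting two symmetries: first, that $\Lc^{*}(x,D)$ has the same structural form as $\Lc(x,D)$ with coefficients of the same regularity class; second, that the set of limiting Carleman weights is symmetric under $\vp \mapsto -\vp$.

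First I would spell out the formal $L^2$-adjoint $\Lc^{*}(x,D)$ by integration by parts on the quadratic, linear, and zero-th order terms of $\Lc(x,D)$, referring to the explicit expression \eqref{adjoint-operator}. This yields
\[
\Lc^{*}(x,D) = (-\Delta)^2 + \sum_{j,k=1}^{n} A^{\sharp}_{jk}(x) D_j D_k + \sum_{j=1}^{n} B^{\sharp}_j(x) D_j + q^{\sharp}(x),
\]
where $A^{\sharp}, B^{\sharp}, q^{\sharp}$ are polynomial expressions in the coefficients $A, B, q$ and their derivatives up to order two. Using the paper's standing assumptions on the regularity of $A$ and $B$ (namely $A \in W^{3,\infty}$, $B \in W^{2,\infty}$, as in the hypothesis of Theorem \ref{mainresult}), one sees that $A^{\sharp} \in W^{2,\infty}(\Omega, \Cb^{n^2})$, $B^{\sharp} \in W^{1,\infty}(\Omega, \Cb^n)$ and $q^{\sharp} \in L^{\infty}(\Omega, \Cb)$, so that $\Lc^{*}(x,D)$ meets exactly the hypotheses required by Proposition \ref{Prop: Interior Carleman Estimate}.

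Next I would verify that $-\vp$ is a limiting Carleman weight for $(-h^2 \Delta)$ on $\wt{\O}$ whenever $\vp$ is. This is immediate: the semiclassical symbol of the conjugated Laplacian with weight $-\vp$ is $|\xi|^2 - |\nabla \vp|^2 - 2i\xi \cdot \nabla \vp = \overline{p_{0,\vp}(x,\xi)}$, so its real and imaginary parts are $\mathrm{Re}(p_{0,\vp})$ and $-\mathrm{Im}(p_{0,\vp})$. Consequently the Poisson bracket picks up only a sign and the vanishing condition on the characteristic variety is preserved. Moreover $\nabla(-\vp) = -\nabla\vp \neq 0$, so the non-degeneracy requirement is also satisfied. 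In particular for the linear weights $\vp(x) = \alpha \cdot x$ used in this article this is obvious.

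Finally I would apply Proposition \ref{Prop: Interior Carleman Estimate} twice: once to the operator $\Lc^{*}(x,D)$ with Carleman weight $\vp$ to obtain the $+\vp/h$ version of \eqref{carlemanestimate4}, and once with weight $-\vp$ to obtain the $-\vp/h$ version. Each application gives, for $0 < h \ll 1$ and $-4 \leq s \leq 0$,
\[
h^{2} \lVert u \rVert_{H^{s+4}_{\mathrm{scl}}} \leq C \, \lVert h^{4} e^{\pm \vp/h} \Lc^{*}(x,D) e^{\mp \vp/h} u \rVert_{H^{s}_{\mathrm{scl}}}, \qquad u \in C^{\infty}_0(\Omega),
\]
with constant $C$ depending on $s, \Omega, A^{\sharp}, B^{\sharp}, q^{\sharp}$, and hence on $s, \Omega, A, B, q$. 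There is essentially no obstacle here; the only small bookkeeping point is to confirm the regularity-transfer from $(A,B,q)$ to $(A^{\sharp}, B^{\sharp}, q^{\sharp})$, which is why the statement needs the extra derivatives of $A$ and $B$ assumed in the main theorem.
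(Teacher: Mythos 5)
Your proposal is correct and follows essentially the same route as the paper: identify $\Lc^{*}(x,D)=\Lc_{A^{\sharp},B^{\sharp},q^{\sharp}}(x,D)$ via \eqref{adjoint-operator}, check that the coefficients land in the classes required by Proposition \ref{Prop: Interior Carleman Estimate}, note that $-\vp$ is again a limiting Carleman weight, and apply the proposition with both weights. The only minor quibble is your closing remark: the regularity stated in the corollary itself ($A\in W^{2,\infty}$, $B\in W^{1,\infty}$) already suffices, since then $A^{\sharp}\in W^{2,\infty}$, $B^{\sharp}\in W^{1,\infty}$ and $q^{\sharp}\in L^{\infty}$, so the extra derivatives of the main theorem are not needed here.
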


Let us now convert the Carleman estimate \eqref{carlemanestimate3} for $\mathcal{L}_{\varphi}^{*}$ into a solvability result for $\mathcal{L}_{\varphi}$.
\begin{proposition}\label{existence}
	Let $A\in W^{2,\infty}(\Omega,\mathbb{C}^{n^{2}})$, $B \in W^{1,\infty}(\Omega,\mathbb{C}^{n^{2}})$ and  $q\in L^{\infty}(\O,\Cb)$ and $\vp$ be any limiting Carleman weight for the semiclassical Laplacian on $\wt{\O}$. 
	For $0< h\ll 1$ sufficiently small, the equation 
	\begin{equation}\label{huv}
	\Lc_{\vp}(x,D)u = v \mbox{ in }\Omega,
	\end{equation}
	has a solution $u\in H^4(\Omega)$, for $v\in L^2(\Omega)$ 
	satisfying, 
	\begin{equation}\label{hdecay}
	{h}^2\lVert u(\cdot;h) \rVert_{H^{2}_{\mathrm{scl}}(\Omega)} \leq C \lVert v \rVert_{L^{2}(\Omega)}.
	\end{equation}
	The constant $C>0$ is independent of $h$ and depends only on $A$, $B$ and $q$.
\end{proposition}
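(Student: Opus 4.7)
The plan is to derive the solvability statement from the Carleman estimate for the formal $L^{2}$-adjoint $\Lc^{*}_{\vp}(x,D)=h^{4}e^{-\vp/h}\Lc^{*}(x,D)e^{\vp/h}$ by a standard Hahn--Banach / duality argument. Since $A^{\sharp}, B^{\sharp}, q^{\sharp}$ (the coefficients of $\Lc^{*}$) inherit the regularity required in Corollary~\ref{Cor: Interior Carleman Estimate}, the adjoint Carleman estimate
\[
h^{2}\lVert\psi\rVert_{H^{s+4}_{\mathrm{scl}}}\le C\lVert\Lc^{*}_{\vp}\psi\rVert_{H^{s}_{\mathrm{scl}}},\qquad\psi\in C^{\infty}_{0}(\Omega),\ -4\le s\le 0,
\]
is at our disposal.

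First I would specialize this estimate to $s=-2$. Given $v\in L^{2}(\Omega)$, define a linear functional $T$ on the subspace $\Lc^{*}_{\vp}(C^{\infty}_{0}(\Omega))\subset H^{-2}_{\mathrm{scl}}(\Omega)$ by
\[
T(\Lc^{*}_{\vp}\psi):=\langle \psi,v\rangle_{L^{2}(\Omega)}.
\]
Injectivity of $\Lc^{*}_{\vp}$ on $C^{\infty}_{0}(\Omega)$, which is a consequence of the Carleman estimate, makes $T$ well-defined. Using Cauchy--Schwarz together with the trivial embedding $\lVert\psi\rVert_{L^{2}}\le \lVert\psi\rVert_{H^{2}_{\mathrm{scl}}}$ and the $s=-2$ Carleman estimate,
\[
|T(\Lc^{*}_{\vp}\psi)|\le \lVert v\rVert_{L^{2}}\lVert \psi\rVert_{L^{2}}\le \lVert v\rVert_{L^{2}}\lVert \psi\rVert_{H^{2}_{\mathrm{scl}}}\le \frac{C}{h^{2}}\lVert v\rVert_{L^{2}}\lVert \Lc^{*}_{\vp}\psi\rVert_{H^{-2}_{\mathrm{scl}}},
\]
so $T$ is bounded on its domain of definition as a subspace of $H^{-2}_{\mathrm{scl}}(\Omega)$.

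Next I would invoke Hahn--Banach to extend $T$ to a continuous linear functional on all of $H^{-2}_{\mathrm{scl}}(\Omega)$ with the same norm. Because $H^{-2}_{\mathrm{scl}}(\Omega)$ is defined in the paper as the dual of $H^{2}_{\mathrm{scl},0}(\Omega)$, the Riesz representation produces a unique $u\in H^{2}_{\mathrm{scl},0}(\Omega)$ representing the extended functional, with
\[
h^{2}\lVert u\rVert_{H^{2}_{\mathrm{scl}}(\Omega)}\le C\lVert v\rVert_{L^{2}(\Omega)}.
\]
Testing against arbitrary $\psi\in C^{\infty}_{0}(\Omega)$ gives $\langle u,\Lc^{*}_{\vp}\psi\rangle=\langle \psi,v\rangle$, i.e.\ $\Lc_{\vp}u=v$ in $\Dc'(\Omega)$. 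Finally, since $\Lc_{\vp}$ is a fourth-order elliptic operator with principal symbol $h^{4}|\xi|^{4}$ and $v\in L^{2}(\Omega)$, interior elliptic regularity promotes $u$ to $H^{4}_{\mathrm{loc}}(\Omega)$; together with the fact that $u\in H^{2}_{\mathrm{scl},0}(\Omega)$ extends by zero across $\partial\Omega$, this yields $u\in H^{4}(\Omega)$.

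There is no real obstacle here beyond bookkeeping; the argument is a template that converts a Carleman estimate into an existence/decay statement. The one point that needs care is the choice $s=-2$, forced by the two requirements that one wants $\lVert u\rVert_{H^{2}_{\mathrm{scl}}}$ on the solution side and that $\langle\psi,v\rangle$ with $v\in L^{2}$ must be controlled by $\lVert\Lc^{*}_{\vp}\psi\rVert_{H^{s}_{\mathrm{scl}}}$: the symmetric value $s=-2$ matches both via the duality $(H^{-2}_{\mathrm{scl}}(\Omega))^{*}=H^{2}_{\mathrm{scl},0}(\Omega)$ built into the paper's conventions.
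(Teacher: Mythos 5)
Your duality skeleton (restrict a functional to the range of $\Lc^{*}_{\vp}$ on $C^{\infty}_{0}(\Omega)$, bound it via the adjoint Carleman estimate, extend by Hahn--Banach, represent by Riesz) is exactly the standard argument the paper invokes by citing \cite{DOS}, and your $s=-2$ choice does produce the stated estimate \eqref{hdecay}. The genuine gap is the final claim $u\in H^{4}(\Omega)$. Interior elliptic regularity only yields $u\in H^{4}_{\mathrm{loc}}(\Omega)$, and the extension-by-zero step does not transport regularity up to $\partial\Omega$: for $u\in H^{2}_{scl,0}(\Omega)$ the zero extension $\tilde u$ lies in $H^{2}(\Rb^{n})$, but $\Delta^{2}\tilde u$ equals $v$ inside $\Omega$ plus layer distributions supported on $\partial\Omega$ coming from the jumps of $\partial_{\nu}^{2}u$ and $\partial_{\nu}^{3}u$, so $\tilde u$ does not solve an equation with $L^{2}$ right-hand side on any neighborhood of $\overline{\Omega}$ and interior regularity there is not applicable. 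To salvage this route you would need global boundary regularity for the Dirichlet bilaplacian (using $u\in H^{2}_{0}(\Omega)$), together with a bootstrap, since with $u\in H^{2}$ the conjugated lower-order terms initially only give $\Delta^{2}u\in H^{-1}(\Omega)$.

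The cleaner repair --- and the one the cited template actually uses --- is to take $s=-4$ instead of $s=-2$: estimate $|T(\Lc^{*}_{\vp}\psi)|\le\lVert\psi\rVert_{L^{2}}\lVert v\rVert_{L^{2}}\le Ch^{-2}\lVert\Lc^{*}_{\vp}\psi\rVert_{H^{-4}_{\mathrm{scl}}}\lVert v\rVert_{L^{2}}$ using \eqref{carlemanestimate4} with $s=-4$, extend the functional over $H^{-4}_{\mathrm{scl}}$, and represent it by some $u\in H^{4}$ satisfying $h^{2}\lVert u\rVert_{H^{4}_{\mathrm{scl}}}\le C\lVert v\rVert_{L^{2}}$. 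This gives the $H^{4}(\Omega)$ membership with no regularity theory at all, implies \eqref{hdecay} a fortiori, and moreover delivers the stronger conclusion $\lVert r\rVert_{H^{4}_{\mathrm{scl}}}=\Oc(h^{2})$ that the paper relies on immediately after \eqref{r} --- a bound your $s=-2$ version cannot produce.
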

The proof of the above result follows from a standard functional analysis argument (see \cite{DOS}).
\subsection{Construction of C.G.O. solutions}\label{ccs}
Here we construct complex geometric optics type solutions of the equation $\Lc(x,D)u=0$ and its $L^2(\Omega)$ conjugate, based on Proposition \ref{existence}. 
We propose a solution of $\Lc(x,D)u=0$ in the form
\begin{equation}\label{CGO} 
u = e^{\frac{(\vp + \I\psi)}{h}}(a_0(x)+ha_1(x) + r(x;h)),
\end{equation}
where $0<h\ll 1$, $\vp(x)$ is a limiting Carleman weight for the semiclassical Laplacian. 
The real valued phase function $\psi$ is chosen such that $\psi$ is smooth near $\overline{\Omega}$ and solves the Eikonal equation $p_{0,\vp}(x,\nabla\psi) =0$ in $\wt{\O}$. 
The functions $a_0$ and $a_1$ are the complex amplitudes solving certain transport equations. 
The function $r(x;h)$ is the correction term which satisfies the following estimate $\lVert r\rVert_{H^{4}_{\mathrm{scl}}} = \Oc(h^2)$.

We consider $\varphi$ and $\psi$ to be 
\begin{equation}\label{phi and psi}
\vp(x)=\mu_1 \cdot x,\quad 
\psi(x)= \mu_2 \cdot x,
\end{equation}
where $\mu_1,\mu_2 \in \Rb^n\setminus\{0\}$ are such that $\mu_1\cdot\mu_2 = 0$ and $\lvert \mu_2 \rvert = \lvert \mu_1 \rvert$. Observe that $\varphi$ and $\psi$ solves the eikonal equation $p_{0,\vp}(x,\nabla\psi) =0$ in $\wt{\O}$, that is $\Lm\nabla \vp\Rm = \Lm\nabla \psi\Rm$ and $\nabla\vp\cdot\nabla\psi =0$.
 
In order to obtain the transport equations for the amplitudes $a_0$ and $a_1$, we define the transport operator \begin{equation}\label{T}T_{\varphi, \psi}= \left[(\nabla\vp +\I\nabla\psi)\cdot\nabla\right]\end{equation} and expand the conjugated operator as
\begin{equation}\label{ConjugatedOperator}
\begin{aligned}
e^{-\frac{(\vp+ \I\psi)}{h}}h^{4}\Lc(x,D)e^{\frac{(\vp + \I\psi)}{h}}
=& (-h^2\Delta -2hT)^2\\
&+ \sum_{j,k=1}^{n} h^{2} A_{jk}\left( D_{j}(\varphi+i\psi)D_{k}(\varphi+i\psi) + 2h D_{j}(\varphi+i\psi)D^{k} + h^2D_{j}D_{k} \right)\\
&+ \sum_{j=1}^{n} h^{3} B_{j}\left( D_{j}(\varphi + i\psi) + hD_{j}\right) + h^{4}q.
\end{aligned}
\end{equation}
We solve for $a_0$ and $a_1$ satisfying
\begin{equation}\label{transportequation1}
(-2T)^2 a_0 +  \sum_{j,k=1}^{n} A_{jk} D^{j}(\varphi+i\psi)\cdot D^{k}(\varphi+i\psi)\, a_0 = 0,
\end{equation}
\begin{equation}\label{transportequation2}
(-2T)^2 a_1 + \sum_{j,k=1}^{n} A_{jk} D^{j}(\varphi+i\psi)\cdot D^{k}(\varphi+i\psi)\, a_1 
= -2(T\circ \Delta + \Delta\circ T)a_0 - \left(B\cdot D(\vp+\I\psi)\right)a_0
\end{equation}
We will present a detailed proof for existence of complex amplitudes $a_0\in H^4(\Omega)$, $a_1\in H^4(\Omega)$ satisfying \eqref{transportequation1} and \eqref{transportequation2} in Section \ref{Sec_amplitude}.
For the time being let us assume that such $a_0$ and $a_1$ exists in $H^4(\O)$. 
Having chosen the amplitudes $a_0$, $a_1$ in this way, from \eqref{ConjugatedOperator} we obtain
\begin{equation}\label{r}
\begin{aligned}
e^{-\frac{(\vp+ i\psi)}{h}}h^{4}\Lc(x,D)\left(e^{\frac{(\vp + i\psi)}{h}}r(x,h)\right)
=&-h^{4}\Lc(x,D)\left(a_0 + ha_1\right)\\ 
&-2h^4(T\circ \Delta + \Delta\circ T)a_1 -h^4\left(B\cdot D(\vp+\I\psi)\right)a_1
\end{aligned}
\end{equation}
Thanks to Proposition \ref{existence}, for $h > 0$ small enough, there exists a solution $r \in H^4(\Omega)$ of \eqref{r} with the decay estimate 
\begin{equation}
\|r\|_{H^4_{scl}}= \mathcal{O}(h^2).
\end{equation}
Summing up, we have the following result.
\begin{proposition}\label{solvibility}
Let us consider the equation
\begin{equation}\label{L-sharp}
\Lc(x,D)u =  (-\Delta)^2 u + \sum_{j,k=1}^{n} {A}_{jk} D^{j}D^{k}u + \sum_{j=1}^{n} B_{j} D^{j} u + qu =0,
\end{equation}
where $A\in W^{3,\infty}(\Omega,\mathbb{C}^{n^{2}})$, $B\in W^{2,\infty}(\Omega,\mathbb{C}^{n})$ and $q\in L^{\infty}(\Omega,\mathbb{C})$.
Then for all $0< h \ll 1$, there exists a solution $u\in H^4(\O)$ of \eqref{L-sharp} of the form
\begin{equation}\label{qw10} 
u(x,h) = e^{\frac{\vp(x) + i\psi(x)}{h}}(a_0(x) +ha_1(x) + r(x;h)) 
\end{equation}
where $\vp$ and $\psi$ are as in \eqref{phi and psi} real valued linear harmonic functions conjugate to each other. Here $a_0,a_1 \in H^4(\Omega)$ are complex amplitudes satisfying the transport equations \eqref{transportequation1}, \eqref{transportequation2} and $r\in H^4(\Omega)$ satisfies the estimate $\lVert r\rVert_{H^{4}_{\mathrm{scl}}} = \Oc(h^2)$.
\end{proposition}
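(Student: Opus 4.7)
The plan is to substitute the ansatz $u = e^{(\varphi + i\psi)/h}(a_0 + h a_1 + r)$ into $\Lc(x,D)u = 0$, invoke the conjugation formula \eqref{ConjugatedOperator}, and match powers of $h$. Since $\varphi(x) = \mu_1\cdot x$ and $\psi(x) = \mu_2\cdot x$ are linear with $|\mu_1| = |\mu_2|$ and $\mu_1\cdot\mu_2 = 0$, we have $\Delta\varphi = \Delta\psi = 0$, $|\nabla\varphi| = |\nabla\psi|$ and $\nabla\varphi\cdot\nabla\psi = 0$, so the eikonal equation $p_{0,\varphi}(x,\nabla\psi) = 0$ holds. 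The conjugated principal part then simplifies to $(-h^2\Delta - 2hT)^2$, where $T = (\nabla\varphi + i\nabla\psi)\cdot\nabla$ is a constant-coefficient first-order operator that commutes with $\Delta$, so the expansion of \eqref{ConjugatedOperator} becomes a clean polynomial in $h$.

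Matching the coefficient of $h^2$ applied to $a_0$ yields exactly the transport equation \eqref{transportequation1}; the coefficient of $h^3$ applied to $(a_0 + ha_1)$ yields \eqref{transportequation2} after moving the cross-terms originating from the biharmonic principal part and the $B$-perturbation acting on $a_0$ to the right-hand side. Assuming -- as deferred to Section \ref{Sec_amplitude} -- that $a_0, a_1 \in H^4(\Omega)$ solving \eqref{transportequation1}, \eqref{transportequation2} exist, every $O(h^2)$ and $O(h^3)$ contribution cancels, and $r$ must satisfy exactly the residual equation \eqref{r}. Because its right-hand side is a finite sum of terms each carrying a prefactor $h^4$ and involving at most fourth derivatives of $a_0, a_1$ multiplied by the coefficients $A \in W^{3,\infty}$, $B \in W^{2,\infty}$, $q \in L^\infty$, its $L^2(\Omega)$-norm is $O(h^4)$.

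I then apply Proposition \ref{existence} to \eqref{r} to obtain $r \in H^4(\Omega)$ with $\lVert r\rVert_{H^4_{\mathrm{scl}}} \leq Ch^{-2}\cdot O(h^4) = O(h^2)$. A minor technicality is that \eqref{r} involves the full complex phase $e^{(\varphi+i\psi)/h}$ whereas Proposition \ref{existence} is stated for the real Carleman weight $\varphi$; this is handled by setting $\hat r = e^{i\psi/h} r$ and noting that multiplication by $e^{\pm i\psi/h}$ is bounded on $H^4_{\mathrm{scl}}(\Omega)$ with $O(1)$ operator norm (the $h$-weighting in the semiclassical norm exactly absorbs the $h^{-1}$-growth of derivatives of $e^{\pm i\psi/h}$), or equivalently by re-running the Carleman estimate of Proposition \ref{Prop: Interior Carleman Estimate} with $\varphi$ replaced by $\varphi + i\psi$, the extra first-order commutator terms being absorbed as in that proof. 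The substantive obstacle is neither of these bookkeeping steps but rather the prior existence of $a_0, a_1$: the transport equations \eqref{transportequation1}, \eqref{transportequation2} are bona fide second-order PDEs with non-trivial potential $A(x)(\mu_1 + i\mu_2)\cdot(\mu_1 + i\mu_2)$ whenever $A$ is anisotropic, and producing a sufficiently rich family of solutions -- which is what eventually feeds the integral identity \eqref{eq_1} in the inverse problem -- is the content of the \emph{CGO amplitude} construction deferred to Section \ref{Sec_amplitude}.
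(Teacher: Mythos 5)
Your proposal follows the paper's own construction essentially verbatim: substitute the WKB ansatz, use the expansion \eqref{ConjugatedOperator} to read off the transport equations \eqref{transportequation1}--\eqref{transportequation2} from the $h^2$ and $h^3$ coefficients, defer the existence of $a_0,a_1\in H^4(\Omega)$ to Section \ref{Sec_amplitude}, and solve the residual equation \eqref{r} (whose right-hand side is $\mathcal{O}(h^4)$ in $L^2$) via Proposition \ref{existence} to obtain $\lVert r\rVert_{H^4_{\mathrm{scl}}}=\mathcal{O}(h^2)$. Your explicit handling of the purely oscillatory factor $e^{i\psi/h}$ (absorbing it into the remainder, or re-running the Carleman estimate with the full complex phase) is a step the paper passes over silently, but it is exactly the intended reduction, so the two arguments coincide.
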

\begin{remark}
Note that in the above proposition, we assumed an extra regularity of $A$ and $B$ to be in $W^{3,\infty}$ and $W^{2,\infty}$ respectively. The reason being, our proof of the existence of the complex amplitudes $a_0$ and $a_1$ in $H^4(\O)$, satisfying the transport equations \eqref{transportequation1} and \eqref{transportequation2} respectively, requires that; See also Remark \ref{reg2}.
\end{remark}
\subsection*{The adjoint operator}
Let us now calculate the formal $L^{2}(\O)$ adjoint of the operator $\Lc(x,D)$ as
\begin{equation}\begin{aligned}\label{adjoint-operator}
		\Lc^{*}(x,D):= (-\Delta)^2 + \sum_{j,k=1}^{n}
		A^{\sharp}_{jk}(x)D^{j}D^{k} + \sum_{j=1}^{n}
		B^{\sharp}_{j}(x)D^{j} + q^{\sharp}(x),
\end{aligned}\end{equation}
where
\[\begin{cases}
	A^{\sharp}_{jk}(x) &= \overline{A_{jk}(x)}, \quad \mbox{for } j,k = 1,2,\dots,n,\\
	B^{\sharp}_{k}(x)  &= \overline{B_k(x)} + \sum_{j=1}^{n} D^{j}\overline{A}_{jk}(x), \quad \mbox{for } k = 1,\dots,n,\\
	q^{\sharp}(x) &= \overline{q(x)} + \sum_{j,k=1}^{n} D^{j}D^{k}\overline{A}_{jk}(x) + \sum_{j=1}^{n} D^{j}\overline{B}_{j}(x).
\end{cases}\]
Since the form of $\Lc^*(x,D)$ is same as that of $\Lc(x,D)$ with possibly different coefficient, we may argue that we can construct a CGO solution for $\Lc^*(x,D)v=0$ in $\O$.
We state it formally in the following remark.
\begin{remark}\label{Rem_adjoint_op}
There exist solution $v \in H^4(\O)$, solving $\Lc^*(x,D)v=0$ in $\O$, of the form
\[	v(x,h) = e^{\frac{-\vp(x) + i\psi(x)}{h}}(a_0^{\sharp}(x) +ha_1^{\sharp}(x) + r^{\sharp}(x;h)), 
\]
provided we have $a_0^{\sharp},a_1^{\sharp} \in H^4(\O)$ satisfying
\[
\begin{aligned}
&(-2T_{-\vp,\psi})^2 a^{\sharp}_0 +  \sum_{j,k=1}^{n} A^{\sharp}_{jk} D^{j}(-\varphi+i\psi)\cdot D^{k}(-\varphi+i\psi)\, a^{\sharp}_0 = 0\quad\mbox{ in }\Omega\\[4pt]
&(-2T_{-\vp,\psi})^2 a^{\sharp}_1 + \sum_{j,k=1}^{n} A^{\sharp}_{jk} D^{j}(-\varphi+i\psi)\cdot D^{k}(-\varphi+i\psi)\, a^{\sharp}_1 \\
&\qquad\qquad\qquad=-2(T_{-\vp,\psi}\circ \Delta + \Delta\circ T_{-\vp,\psi})a^{\sharp}_0 - \left(B^{\sharp}\cdot D(-\vp+\I\psi)\right)a^{\sharp}_0\quad\mbox{in }\Omega.
\end{aligned}
\]
\end{remark}

Now for existence of the amplitudes required in Proposition \ref{solvibility} and Remark \ref{Rem_adjoint_op} we move into the next section, where we show that the amplitudes $a_0$ and $a_1$ exist with suitable regularity and moreover, one can construct CGO type forms of the amplitudes solving homogeneous higher order transport equations.

\section{Analysis on the amplitudes}\label{Sec_amplitude}
In this section we prove existence of the solutions of \eqref{transportequation1} and \eqref{transportequation2} having a specific form. This part is crucial in our analysis since we get a second order transport equation with a potential term for the amplitudes which does not appear in the previous works. In the earlier works on the Schr\"odinger and the magnetic Schr\"odinger operators \cite{SU,SUN,DOS,D_K_S_U,KS13,Chung,B18} we encounter only first order transport equations. On the other hand, the works on biharmonic and polyharmonic operators \cite{KRU2,KRU1,VT,BG19} we get potential free higher order transport equations for the amplitudes that $((\mu_1+i\mu_2)\cdot\nabla)^2a_0=0$, which can be dealt with in the same way as for the first order transport equations, see \cite{BG19}.

Let us first discuss the solvability of the transport equations as in \eqref{transportequation1}:
\begin{equation}\label{nte1}
4\big((\mu_1+i\mu_2)\cdot\nabla_x\big)^2 a_0 + \big( A(x)(\mu_1+i\mu_2)\cdot(\mu_1+i\mu_2)\big)\, a_0 = 0 \quad\mbox{in }\Omega.
\end{equation} 

\vspace{2pt}
\noindent
Recall that, here $\mu_1\perp\mu_2$ are unit vectors in $\mathbb{R}^n$. Let us invoke the change of variables $x \mapsto (t,s,x')$ such that
\[\mu_1\cdot\nabla = \partial_t
\mbox{ and }  \mu_2\cdot \nabla =\partial_s,\]
where $(t,s)\in \mathbb{R}^2$ and $x' \in \Rb^{n-2}$.
Let us define
\[	\Sigma_{x'} := \{(t,s) \in \Rb^2 : (t,s,x') \in \O\}.
\]
Consider the $\Rb^2$ plane and take $z=t+is \in \mathbb{C}$ so that
we can realize the operator $(\mu_1+i\mu_2)\cdot\nabla_x$ as $2\overline{\partial_z}$ operator. This says essentially \eqref{nte1} fall into the category of solving equations like 
\begin{equation}\label{me} (( e_1+ ie_2)\cdot \nabla_{t,s})^2a_0 + c(t,s)a_0=0 \quad\mbox{in }\Sigma\subset\mathbb{R}^2\end{equation}
where $\Sigma\subset \mathbb{R}^2$ is a regular bounded open set, $c(t,s)\in L^\infty(\Sigma;\Cb)$ and $e_1=(1,0),e_2=(0,1) \in \Rb^2$ are elements of the standard Euclidean basis vectors of $\Rb^2$.

\vspace{2pt}
In order to solve \eqref{me}, we propose complex geometric optics solutions as
\begin{equation}\label{ansatz}
a_0= e^{\frac{\widetilde{\varphi}-i\widetilde{\psi}}{\h}}(b_0+\rho(\cdot;\h))
\end{equation}
where $b_0\in C^\infty(\overline{\Sigma})$ is non-zero, $\widetilde{\vp}, \widetilde{\psi}$ are real valued linear harmonic functions,  conjugate to each other, and $ \rho$ to be determined with the desired decay estimate with respect to $\tau>0$ small enough.

We first decouple the operator $(\pm\partial_t + i\partial_s )^2$ into real and complex parts to have the following Carleman estimates. 
\begin{proposition}\label{CAR}
	Let $\Sigma\subset \mathbb{R}^2$ be a regular bounded open set. Let $\widetilde{\varphi}(t,s)= at+bs$ be a linear function, $(a,b)\neq (0,0)$. We have
	\begin{equation}\label{carR} \|Re\,\left(e^{-\frac{\widetilde{\varphi}}{\h}}\h^2(\pm\partial_t + i\partial_s )^2e^{\frac{\widetilde{\varphi}}{\h}}\right)w\|_{L^2} \geq \,C_1\h\,\|w\|_{L^2},
	\quad w\in C^\infty_0(\Sigma)\end{equation}
	and
	\begin{equation}\label{CAR_1} \left\|Im\,\left(e^{-\frac{\widetilde{\varphi}}{\h}}\h^2(\pm\partial_t + i\partial_s )^2e^{\frac{\widetilde{\varphi}}{\h}}\right)w\right\|_{L^2} \geq \,C_2\h\,\|w\|_{L^2},
	\quad w\in C^\infty_0(\Sigma)\end{equation}
	where $C_1,C_2>0$ are independent of $0<\h<1$,  $w$.
\end{proposition}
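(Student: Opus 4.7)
The plan is to compute the conjugated operator explicitly, factor its real and imaginary parts into products of first-order operators with real constant coefficients, and then extract each lower bound by combining a direct integration-by-parts identity with a one-dimensional Poincar\'e inequality on the bounded set $\Sigma$.

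First I would record that
\[ e^{-\widetilde{\varphi}/\h}\,\h^{2}(\partial_t+i\partial_s)^{2}\,e^{\widetilde{\varphi}/\h} = \bigl(\h(\partial_t+i\partial_s)+(a+ib)\bigr)^{2},\]
so that expanding the square and separating real and imaginary parts yields the two real-coefficient operators
\[ R=(\h\partial_t+a)^2-(\h\partial_s+b)^2 = U_{-}U_{+}, \qquad I = 2(\h\partial_t+a)(\h\partial_s+b),\]
where $U_{\pm}:=\h(\partial_t\pm\partial_s)+(a\pm b)$. A direct computation shows that the operator $(-\partial_t+i\partial_s)^2$ produces the same $R$ and $-I$, so both signs in the statement are treated simultaneously.

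The elementary workhorse is the identity: for any real constant $c$, any constant-coefficient real directional derivative $\partial_{\alpha}$, and any $w\in C_{0}^{\infty}(\Sigma)$, the cross term $\langle\partial_{\alpha}w,w\rangle_{L^2}$ is purely imaginary (by one integration by parts), hence
\[\|(\h\partial_{\alpha}+c)w\|_{L^2}^{2}=\h^{2}\|\partial_{\alpha}w\|_{L^2}^{2}+c^{2}\|w\|_{L^2}^{2}.\]
Combined with the one-dimensional Poincar\'e inequality $\|\partial_{\alpha}w\|_{L^2}\geq c_{\Sigma}\|w\|_{L^2}$ on the bounded domain $\Sigma$, this gives $\|(\h\partial_{\alpha}+c)w\|_{L^2}\geq\sqrt{\h^{2}c_{\Sigma}^{2}+c^{2}}\,\|w\|_{L^2}$.

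To bound $\|Iw\|_{L^2}$, I would set $v:=(\h\partial_s+b)w\in C_{0}^{\infty}(\Sigma)$ and chain the previous inequality to obtain
\[\|Iw\|_{L^2}=2\|(\h\partial_t+a)v\|_{L^2}\geq 2\sqrt{\h^{2}c_{\Sigma}^{2}+a^{2}}\,\sqrt{\h^{2}c_{\Sigma}^{2}+b^{2}}\,\|w\|_{L^2}\geq C_{2}\h\,\|w\|_{L^2},\]
the final inequality being uniform across the sub-cases $ab\neq 0$, $a=0\neq b$, $b=0\neq a$. The estimate for $\|Rw\|_{L^2}=\|U_{-}U_{+}w\|_{L^2}$ runs in parallel, with the directional derivatives $\partial_t\pm\partial_s$ (for which Poincar\'e still applies after rotating coordinates) in place of $\partial_t,\partial_s$ and with the constants $a\pm b$ in place of $a,b$; the exhaustive sub-cases there are $|a|\neq|b|$, $a=b\neq 0$, and $a=-b\neq 0$. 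The main obstacle is exactly these degenerate sub-cases where one of the constants $a$, $b$, $a+b$, $a-b$ vanishes, so that the constant term alone cannot supply the lower bound and a Poincar\'e factor carrying the required $\h$ must step in; the hypothesis $(a,b)\neq(0,0)$ is precisely what keeps this mechanism non-degenerate throughout.
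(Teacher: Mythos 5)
Your proof is correct, and it takes a recognizably different route from the paper's. The paper keeps the real part (your $R$, which coincides with its $P_\pm$) as a single second-order operator, splits it into its self-adjoint part $A_\pm$ and skew-adjoint part $iB_\pm$ with respect to $D_1=\tfrac{1}{i}\partial_t$, $D_2=\tfrac{1}{i}\partial_s$, uses $\|P_\pm w\|^2=\|A_\pm w\|^2+\|B_\pm w\|^2$ (the commutator term vanishes for constant coefficients), discards $\|A_\pm w\|^2$, and applies the directional Poincar\'e inequality once to the first-order operator $B_\pm=2\tau(\pm aD_1-bD_2)$; since the coefficient vector $(\pm a,-b)$ is automatically nonzero when $(a,b)\neq(0,0)$, no case analysis is needed, and the same scheme handles the imaginary part with $\wt{B}_\pm=2\tau(bD_1+aD_2)$. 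You instead factor $R=U_-U_+$ and $I=2(\tau\partial_t+a)(\tau\partial_s+b)$ into commuting first-order factors and iterate an exact Pythagorean identity plus Poincar\'e for each factor; this forces the exhaustive discussion of the degenerate sub-cases where $a$, $b$, or $a\pm b$ vanishes, but in exchange it yields an explicit lower bound such as $2\sqrt{\tau^2c_\Sigma^2+a^2}\sqrt{\tau^2c_\Sigma^2+b^2}\,\|w\|_{L^2}$, which in the nondegenerate sub-cases is in fact of order $O(1)$ rather than the $O(\tau)$ the statement asks for. Both arguments rest on the same two elementary ingredients --- a symmetric/antisymmetric splitting (for your first-order factors, the observation that $\langle\partial_\alpha w,w\rangle$ is purely imaginary) and the directional Poincar\'e inequality --- so the difference is one of organization, but your factorization and the resulting sharper constants are not in the paper.
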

\bpr
Let us define
\begin{align*}
P_\pm  &:= Re\,e^{-\frac{\widetilde{\varphi}}{\h}}\h^2(\pm\partial_t + i\partial_s )^2e^{\frac{\widetilde{\varphi}}{\h}}\\[1mm] &=\big(\tau^2(D_2^2-D^2_1) + (a^2-b^2) \big)+\, i\,2\tau(\pm aD_1-bD_2)
\end{align*}
where $D_1=\frac{1}{i}\partial_t$ and $D_2=\frac{1}{i}\partial_s$. 

Let us write 
\[P_\pm =A_\pm+ iB_\pm\]
where $A^{*}_\pm = A_\pm$ and $B^{*}_\pm = B_\pm$ given as 
\begin{comment}
Here, $A_\pm$ and $iB_\pm$ are the self-adjoint and
skew-adjoint parts of $P_\pm$. Since
\begin{align*}
P_\pm^{*} =\Big(Re\,e^{-\frac{\widetilde{\varphi}}{\h}}\h^2(\pm\partial_t + i\partial_s )^2e^{\frac{\widetilde{\varphi}}{\h}}\Big)^{*} &= Re\,e^{\frac{\widetilde{\varphi}}{\h}}\h^2(\pm\partial_t + i\partial_s )^2e^{-\frac{\widetilde{\varphi}}{\h}}\\[1mm] &=\big(\tau^2(D_2^2-D^2_1) + ( b^2-a^2) \big)-\, i\,2\tau(aD_1-bD_2)
\end{align*}
we obtain $A$ and $B$ from the formulas (cf. the real and imaginary parts
of a complex number)
\end{comment}
\[\begin{cases}
A_\pm = \frac{P_\pm +  P^{*}_\pm}{2}=\tau^2(D_2^2-D^2_1) + ( a^2-b^2) \\[2mm]
B_\pm = \frac{P_\pm- P^{*}_\pm}{2i}= 2\tau(\pm aD_1-bD_2).
\end{cases}\]
Now we have 
\[
\|P_\pm w\|^2_{L^2} = \|A_\pm w\|^2 + \|B_\pm w\|^2 + i([A_\pm,B_\pm]w|w),\quad w\in C^{\infty}_0(\Sigma)
\]
where $[A_\pm,B_\pm]=A_\pm B_\pm - B_\pm A_\pm$ is the commutator of $A_\pm$ and $B_\pm$. Since in our
case $A_\pm$ and $B_\pm$ are constant coefficient differential operators, thus $[A_\pm, B_\pm] = 0$.

Therefore,
\[
\|P_\pm w\|^2_{L^2} = \|A_\pm w\|^2_{L^2} + \|B_\pm w\|^2_{L^2} \geq \|B_\pm w\|^2_{L^2}.
\]
Now by using the Poincar\'{e} inequality\footnote{Let $\A \in \Rb^n$ be some non-zero vector and $S:=\{x \in \Rb^n \st k_1<\A\cdot x<k_2 \}$ be some unbounded strip for some $k_1,k_2 \in \Rb$ (note that, $S$ can contain any bounded set in $\mathbb{R}^n$).  Then one has the Poincar\'{e} inequality (c.f. \cite{SALO}):
	\[ \| (\A\cdot D)u\|_{L^2(S)} \geq C\|u\|_{L^2(S)},\quad \forall u \in C^{\infty}_c(S). 
	\]} we simply obtain 
\[
\|P_\pm w\|^2_{L^2}  \geq C_1\tau^2\,\|w\|^2_{L^2}, \quad w\in C^{\infty}_0(\Sigma)
\]
where $C_1>0$ is independent of $\tau>0$ and $w$.  

This completes the proof of \eqref{carR}. Similarly, one proves \eqref{CAR_1}. We give a quick sketch of that. 

Let 
\begin{align*}
Q_\pm  &:= Im\,e^{-\frac{\widetilde{\varphi}}{\h}}\h^2(\pm\partial_t + i\partial_s )^2e^{\frac{\widetilde{\varphi}}{\h}}\\[1mm] &= \underbrace{\pm2\big(-\tau^2D_{12}^2 + ab \big)}_{\wt{A}_\pm}+ \, i\,\underbrace{2\tau(bD_1 +a D_2)}_{\wt{B}_\pm}.
\end{align*}
Here $\wt{A}^{*}_\pm = \wt{A}_\pm$, $\wt{B}^{*}_\pm = \wt{B}_\pm$, and $\wt{A}_\pm$, $i\wt{B}_\pm$ are the self-adjoint and
skew-adjoint parts of $Q_\pm$ respectively, to have as before
\[
\|Q_\pm w\|^2_{L^2} = \|\wt{A}_\pm w\|^2_{L^2} + \|\wt{B}_\pm w\|^2_{L^2} \geq \|\wt{B}_\pm w\|^2_{L^2}\geq C_2\tau^2\,\|w\|^2_{L^2}, \quad w\in C^{\infty}_0(\Sigma)
\]
where the last inequality is due to Poincar\'{e}, and $C_2>0$ is independent of $\tau$ and $w$.
\epr

\begin{proposition}\label{scaling_2}
	Let $\Sigma$, $\widetilde{\varphi}$ are as in Proposition \ref{CAR}. Then for $\h>0$ small enough and $c=c_1+ic_2 \in L^{\infty}(\Sigma,\Cb)$ we get the following estimates
	\begin{equation}\label{P_3}
	\left\|Re\,\left(e^{-\frac{\widetilde{\varphi}}{\h}}\h^2(\pm\partial_t + i\partial_s )^2e^{\frac{\widetilde{\varphi}}{\h}}\right)w + \h^2c_1 w\right\|_{L^2} \geq \,C\h\,\|w\|_{L^2},
	\quad w\in C^\infty_0(\Sigma)
	\end{equation}
	and
	\begin{equation}\label{P_4}
	\left\|Im\,\left(e^{-\frac{\widetilde{\varphi}}{\h}}\h^2(\pm\partial_t + i\partial_s )^2e^{\frac{\widetilde{\varphi}}{\h}}\right)w + \h^2c_2w\right\|_{L^2} \geq \,C\h\,\|w\|_{L^2},
	\quad w\in C^\infty_0(\Sigma)
	\end{equation}
where $C>0$ is independent of $0<\tau<1$ and $w$.  
\end{proposition}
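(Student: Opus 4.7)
The plan is to deduce Proposition \ref{scaling_2} from Proposition \ref{CAR} by a simple triangle inequality and absorption argument, treating the added term $\h^2 c_j w$ as a lower-order perturbation that can be controlled by the gain of one power of $\h$ that the Carleman estimates of Proposition \ref{CAR} already provide.

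Concretely, for the real part, I would start with the reverse triangle inequality
\[
\left\|\operatorname{Re}\!\left(e^{-\frac{\widetilde{\varphi}}{\h}}\h^2(\pm\partial_t+i\partial_s)^2 e^{\frac{\widetilde{\varphi}}{\h}}\right)w + \h^2 c_1 w\right\|_{L^2}
\;\geq\;
\|P_\pm w\|_{L^2} - \h^2\|c_1\|_{L^\infty}\|w\|_{L^2},
\]
with $P_\pm$ as defined in the proof of Proposition \ref{CAR}. Then applying \eqref{carR} to bound $\|P_\pm w\|_{L^2}$ from below by $C_1\h\|w\|_{L^2}$, the right-hand side becomes
\[
\bigl(C_1\h - \h^2 \|c_1\|_{L^\infty}\bigr)\|w\|_{L^2} \;=\; \h\bigl(C_1 - \h\|c_1\|_{L^\infty}\bigr)\|w\|_{L^2}.
\]
Choosing $\h>0$ so small that $\h\|c_1\|_{L^\infty} \leq C_1/2$ yields \eqref{P_3} with constant $C=C_1/2$. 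The proof of \eqref{P_4} is identical, using \eqref{CAR_1} and $c_2$ in place of $c_1$.

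There is no real obstacle here: the essential work was already done in Proposition \ref{CAR}, which produced an $O(\h)$ lower bound while the added potential term is only $O(\h^2)$, so absorption goes through for all sufficiently small $\h$. The only thing to notice is that the constant $C$ in the statement now depends on $\|c\|_{L^\infty}$, but this is harmless since $c\in L^\infty(\Sigma;\Cb)$ is fixed in the subsequent construction of the CGO amplitudes.
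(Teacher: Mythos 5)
Your proposal is correct and follows exactly the paper's own argument: a reverse triangle inequality to peel off the $\h^2 c_j w$ term, the lower bound from Proposition \ref{CAR}, and absorption of the $O(\h^2)$ contribution into the $O(\h)$ gain for $\h$ small. Nothing further is needed.
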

\bpr
Since $\| c_1(x) w(x)\|_{L^2} \leq C\| w\|_{L^2}$, thus  for $\h>0$ small enough 
\[	\begin{aligned}
&\left\|Re\,\left(e^{-\frac{\widetilde{\varphi}}{\h}}\h^2(\pm\partial_t + i\partial_s )^2e^{\frac{\widetilde{\varphi}}{\h}}\right)w + \h^2c_1 w\right\|_{L^2}\\
&\quad \geq \left\|Re\,\left(e^{-\frac{\widetilde{\varphi}}{\h}}\h^2(\pm\partial_t + i\partial_s )^2e^{\frac{\widetilde{\varphi}}{\h}}\right)w\right\|_{L^2} 
- \h^2\| c_1w \|_{L^2} \\
&\quad \geq \,C\h\,\|w\|_{L^2} - C\h^2\,\|w\|_{L^2} \geq C\h\,\|w\|_{L^2},
\qquad w\in C^\infty_0(\Sigma).
\end{aligned}\]
This proves \eqref{P_3}.
Similarly we prove \eqref{P_4}. 
\epr
Finally, as an application of the above results we prove the following existence result.

\begin{proposition}[Existence result]\label{ex_le}
	Let $\Sigma$, $\widetilde{\varphi}$ are as in Proposition \ref{CAR}, and $c\in L^{\infty}(\Sigma)$. Then for $\h>0$ small enough, and for a given $v\in L^2(\Sigma)$ the equation 
	\[ T_{\widetilde{\varphi}, \pm} u :=		e^{-\frac{\widetilde{\varphi}}{\h}}\,\tau^2\,((\pm e_1+ ie_2)\cdot \nabla)^2 e^{\frac{\widetilde{\varphi}}{\h}}u +\tau^2\,c(t,s)u =v
	\]
	has a solution $u\in L^2(\Sigma)$,
	satisfying,
	\begin{equation}\label{tau_esti}
	\h\lVert u \rVert_{L^2} \leq C \lVert v \rVert_{L^2}
	\end{equation}
	where the constant $C>0$ is independent of $\h>0$ and $u$. 
\end{proposition}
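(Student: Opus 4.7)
The plan is to employ the standard duality (Hahn--Banach plus Riesz representation) argument that converts a Carleman inequality on the formal $L^{2}$-adjoint into an $L^{2}$-solvability result, exactly parallel to the proof of Proposition \ref{existence}. The first step is to compute the adjoint: using $(\partial_t)^{*}=-\partial_t$ and $(i\partial_s)^{*}=i\partial_s$, I would get
\[
T_{\widetilde{\varphi},\pm}^{*} \;=\; e^{\widetilde{\varphi}/\h}\,\h^{2}(\mp\partial_t+i\partial_s)^{2}\,e^{-\widetilde{\varphi}/\h}\;+\;\h^{2}\bar c,
\]
which has the same algebraic form as $T_{-\widetilde{\varphi},\mp}$ with potential $\bar c\in L^{\infty}(\Sigma)$ and the still-linear weight $-\widetilde{\varphi}$. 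Proposition \ref{scaling_2} then applies (with $\pm$ replaced by $\mp$) to yield componentwise Carleman estimates on $\mathrm{Re}(T^{*})$ and $\mathrm{Im}(T^{*})$.

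The technical heart of the proof is to combine these into the single Carleman estimate
\[
\h\,\lVert w\rVert_{L^{2}(\Sigma)}\;\leq\;C\,\lVert T_{\widetilde{\varphi},\pm}^{*}w\rVert_{L^{2}(\Sigma)},\qquad w\in C^{\infty}_{0}(\Sigma).
\]
To this end I would exploit the factorisation
\[
e^{-\widetilde{\varphi}/\h}\h^{2}((\pm e_{1}+ie_{2})\cdot\nabla)^{2}e^{\widetilde{\varphi}/\h}\;=\;\bigl[\h((\pm e_{1}+ie_{2})\cdot\nabla)+(\pm a+ib)\bigr]^{2}
\]
of the conjugated Laplacian as $\mathcal D_{\pm}^{2}$. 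The first-order operator $\mathcal D_{\pm}$ further decomposes as $\mathcal A'+i\mathcal B'$ with commuting constant-coefficient self-adjoint parts $\mathcal A'=i\h\partial_s\pm a$ and $\mathcal B'=\mp i\h\partial_t+b$, so that the inner product $(\mathcal A'w,\mathcal B'w)$ is real for every $w\in C^{\infty}_{0}(\Sigma)$ and consequently $\lVert\mathcal D_{\pm}w\rVert^{2}=\lVert\mathcal A'w\rVert^{2}+\lVert\mathcal B'w\rVert^{2}$, with no sign-indefinite cross term. Combining this with the Poincar\'e-type inequality recalled in the footnote of Proposition \ref{CAR} yields a first-order Carleman estimate for $\mathcal D_{\pm}$, which iterated gives one on $L^{*}$, and absorbing $\h^{2}\bar c$ for $\h>0$ small enough produces the required estimate on $T^{*}$.

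With the Carleman estimate in hand, the existence and norm bound follow routinely. Given $v\in L^{2}(\Sigma)$, define a linear functional
\[
\ell\bigl(T_{\widetilde{\varphi},\pm}^{*}w\bigr)\;:=\;(w,v)_{L^{2}(\Sigma)},\qquad w\in C^{\infty}_{0}(\Sigma),
\]
on the subspace $T^{*}(C^{\infty}_{0}(\Sigma))\subset L^{2}(\Sigma)$. Injectivity of $T^{*}$ (from the Carleman estimate) makes $\ell$ well defined, and
\[
\lvert\ell(T^{*}w)\rvert\;\leq\;\lVert w\rVert_{L^{2}}\lVert v\rVert_{L^{2}}\;\leq\;\tfrac{C}{\h}\,\lVert T^{*}w\rVert_{L^{2}}\lVert v\rVert_{L^{2}}
\]
shows $\ell$ is bounded with operator norm at most $C\lVert v\rVert_{L^{2}}/\h$. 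The Hahn--Banach theorem extends $\ell$ to a bounded linear functional on $L^{2}(\Sigma)$ with the same norm bound, and Riesz representation produces $u\in L^{2}(\Sigma)$ with $\lVert u\rVert_{L^{2}}\leq C\lVert v\rVert_{L^{2}}/\h$ satisfying $(T^{*}w,u)_{L^{2}}=(w,v)_{L^{2}}$ for every $w\in C^{\infty}_{0}(\Sigma)$; this is precisely $T_{\widetilde{\varphi},\pm}u=v$ in the distributional sense, and rearranging the bound gives the desired estimate $\h\lVert u\rVert_{L^{2}}\leq C\lVert v\rVert_{L^{2}}$.

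The main obstacle is the combination step: the two componentwise Carleman estimates of Proposition \ref{scaling_2} cannot simply be added, because the expansion $\lVert T^{*}w\rVert^{2}=\lVert\mathrm{Re}(T^{*})w\rVert^{2}+\lVert\mathrm{Im}(T^{*})w\rVert^{2}+2\,\mathrm{Im}(\mathrm{Im}(T^{*})w,\mathrm{Re}(T^{*})w)$ contains a cross term that is not a priori sign-definite. The $\mathcal D_{\pm}^{2}$ factorisation is precisely the device that bypasses this difficulty, by reducing the Carleman estimate for $L^*$ to one on the first-order operator $\mathcal D_{\pm}$ whose self-adjoint/skew-adjoint parts commute.
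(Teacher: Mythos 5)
There is a genuine gap at what you call the technical heart of the argument, and it cannot be repaired along the route you propose. Your first-order Carleman estimate $\lVert \mathcal{D}_{\pm}w\rVert_{L^{2}}\geq C\h\lVert w\rVert_{L^{2}}$ is correct (the commuting self-adjoint parts plus the Poincar\'e inequality work exactly as you say), but iterating it yields only $\lVert \mathcal{D}_{\pm}^{2}w\rVert_{L^{2}}\geq C\h^{2}\lVert w\rVert_{L^{2}}$. This is one power of $\h$ short of the estimate $\h\lVert w\rVert\leq C\lVert T^{*}_{\widetilde{\varphi},\pm}w\rVert$ that your duality step requires, and it is of exactly the same order as the perturbation $\h^{2}\overline{c}\,w$, so the absorption step fails: you only obtain $\lVert T^{*}w\rVert\geq (C-\lVert c\rVert_{L^{\infty}})\h^{2}\lVert w\rVert$, which is vacuous unless $c$ is small. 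Worse, the single complex Carleman estimate you are aiming for is in fact false. Writing the conjugated first-order factor as $\mathcal{E}=\h(\mp\partial_t+i\partial_s)+(\pm a-ib)$ and taking $w_{\h}=e^{i(\mp bt\pm as)/\h}\chi$ with $\chi\in C^{\infty}_{0}(\Sigma)$ fixed (this is precisely the oscillation $e^{-i\widetilde{\psi}/\h}$ of the ansatz), the real linear phase annihilates the zeroth-order term of $\mathcal{E}$, so $\mathcal{E}^{2}w_{\h}=\h^{2}e^{i(\mp bt\pm as)/\h}(\mp\partial_t+i\partial_s)^{2}\chi$ and hence $\lVert T^{*}_{\widetilde{\varphi},\pm}w_{\h}\rVert_{L^{2}}=\mathcal{O}(\h^{2})$ while $\lVert w_{\h}\rVert_{L^{2}}=\lVert\chi\rVert_{L^{2}}$. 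So no argument can deliver $\h\lVert w\rVert\leq C\lVert T^{*}w\rVert$ for all complex $w\in C^{\infty}_{0}(\Sigma)$; and since solvability with the uniform bound $\h\lVert u\rVert\leq C\lVert v\rVert$ for every $v\in L^{2}(\Sigma)$ would (by testing against $v=w_\h$) imply exactly that a priori estimate, the Hahn--Banach/Riesz scheme cannot be closed in the form you set it up.

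The paper's device is different in a way that matters quantitatively: it never claims an estimate for the full complex operator. Propositions \ref{CAR} and \ref{scaling_2} bound only $\mathrm{Re}(T^{*})$ and $\mathrm{Im}(T^{*})$ separately, and the point is that the skew-adjoint part of each of these second-order operators is $2\h(\pm aD_1-bD_2)$, respectively $2\h(bD_1+aD_2)$ --- exactly \emph{first} order in $\h$, with symbol vanishing only on a hyperplane through the origin --- so Poincar\'e gives a lower bound $C\h\lVert w\rVert$ rather than $C\h^{2}\lVert w\rVert$, and the extra power of $\h$ is what absorbs the $\mathcal{O}(\h^{2})$ potential. The duality argument is then run componentwise on $L^{2}(\Sigma)\times L^{2}(\Sigma)$ over $\mathbb{R}$ with real test functions $(w_1,w_2)$, for which the cross term $\mathrm{Im}\langle \mathrm{Re}(T^{*})w,\mathrm{Im}(T^{*})w\rangle$ you worry about vanishes identically. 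You correctly identified that cross term as the obstruction to adding the two componentwise estimates, but the $\mathcal{D}_{\pm}^{2}$ factorisation does not bypass it --- it trades it for the loss of one power of $\h$, which is fatal here.
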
 
\begin{proof}
	Let us observe that the formal adjoint of $T_{\widetilde{\varphi}, \pm}$ is 
	\[	T^*_{\widetilde{\varphi},\pm} = e^{\frac{\widetilde{\varphi}}{\h}}\,\tau^2\,((\mp e_1+ ie_2)\cdot \nabla)^2 e^{-\frac{\widetilde{\varphi}}{\h}}u +\h^2\,\overline{c},	\]
	so, Proposition \ref{scaling_2} is also true for $Re\, T^*_{\widetilde{\varphi},\pm}$ and $Im\,T^*_{\widetilde{\varphi},\pm}$ as well.
	
	Let us define \[D := \{(Re\,T^{*}_{\widetilde{\varphi},\pm}w_1, Im\, T^{*}_{\widetilde{\varphi},\pm}w_2) \st w_1,w_2\in C_{0}^{\infty}(\Sigma)\} .\] 
	Clearly, $D\subset L^2(\Sigma)\times  L^2(\Sigma) $ is a subspace. Now	for a given $v=(v_1+iv_2)\in L^2(\Sigma)$, let us  consider the linear functional 
	$L: D \to \mathbb{R}$, 
	\[L(Re\,T^{*}_{\widetilde{\varphi}, \pm}w_1, Im\,T^{*}_{\widetilde{\varphi}, \pm}w_2) = \langle w_1,  Re\,v\rangle_{L^2}+ \langle w_2,  Im\,v\rangle_{L^2}.\]
	By the above Carleman estimate (cf.\eqref{CAR}), it follows that 
	\begin{align*}|L(Re\,T^{*}_{\widetilde{\varphi}, \pm}w_1, Im\,T^{*}_{\widetilde{\varphi}, \pm}w_2)| &\leq ||w_1||_{L^2} ||v_1||_{L^2} +||w_2||_{L^2} ||v_2||_{L^2}\\[1mm]
	&\leq C\h^{-1}(\|Re\,T^{*}_{\widetilde{\varphi},\pm} w_1||_{L^2} ||v_1||_{L^2}+\|Im\,T^{*}_{\widetilde{\varphi},\pm} w_2||_{L^2} ||v_2||_{L^2}).
	\end{align*}
	The Hahn-Banach theorem ensures that there is a bounded linear functional $\widetilde L : L^2(\Sigma)\times  L^2(\Sigma) \to \mathbb{R}$ satisfying $\widetilde L = L$ on $D$ and $||\widetilde L|| \leq C\h^{-1} ||v||_{L^2(\Sigma)}$. 
	By the Riesz Representation theorem there exists $(u_1,u_2) \in L^2(\Sigma)\times  L^2(\Sigma) $ such that for all $(\theta_1,\theta_2)\in D $,
	\[	\widetilde L (\theta) = \langle u_1, \theta_1 \rangle_{L^2} + \langle u_2, \theta_2 \rangle_{L^2}, \quad \theta_1= Re\,T^{*}_{\widetilde{\varphi},\pm} w_1,\,  \theta_2= Re\,T^{*}_{\widetilde{\varphi},\pm} w_2,\]  
	this implies \begin{align*} \langle Re\,T_{\widetilde{\varphi},\pm} u_1, w_1 \rangle_{L^2} + \langle Im\,T_{\widetilde{\varphi},\pm} u_2, w_2 \rangle_{L^2}
	= \langle Re\, v, w_1\rangle_{L^2} +\langle Im\,v, w_2\rangle_{L^2}
	\end{align*}
	for all  $(w_1, w_2) \in C^{\infty}_0(\Sigma)\times C^{\infty}_0(\Sigma)$. 
	
	Hence, $Re\,T_{\widetilde{\varphi},\pm} u_1 =Re\,v$ in $\Sigma$, and $Im\,T_{\widetilde{\varphi},\pm} u_2 =Im\,v$ in $\Sigma$, along with the bound: $||u_k||_{L^2}\leq  C\h^{-1} ||v_k||_{L^2}$, $k=1,2$. 
		Therefore, for $v\in L^2(\Sigma)$, there exists $u=u_1+iu_2\in L^2(\Sigma),$
	$	\,T_{\widetilde{\varphi},\pm}u = v$  in $\Sigma$. 
	\hfill\end{proof}
Now we will apply the above result in order to establish the CGO solution for the equation 
\begin{equation}\label{me2} ((e_1+ ie_2)\cdot \nabla_{t,s})^2a_0 + c(t,s)a_0=0 \quad\mbox{in }\Sigma\subset\mathbb{R}^2,\end{equation}
where $c(t,s)$ is a bounded function in $\Sigma$. 
Let \begin{equation}\label{ansatz2}
a_0 = e^{\frac{\widetilde{\varphi}-i\widetilde{\psi}}{\h}}(b_0+\rho(\cdot;\h))
\end{equation}
where  
\begin{equation}\label{phipsi}
\begin{cases}
\widetilde{\varphi}(t,s)= at+bs\\[2pt] \widetilde{\psi}(t,s) = bt-as
\end{cases}\quad (a,b)\neq (0,0)
\end{equation}
are harmonic conjugate to each other, i.e. $\partial_t\widetilde{\psi} =\partial_s\widetilde{\varphi},$ and  $\partial_s\widetilde{\psi}=-\partial_t\widetilde{\varphi}$.  The amplitude function $b_0\in C^\infty(\overline{\Sigma})$ is non-zero, and the reminder $\rho$ to be determined later.

\vspace{5pt}
Now plugging \eqref{ansatz2} into \eqref{me2} we get
\begin{equation}\begin{aligned}\label{tau0}
&e^{-\frac{(at+bs)}{\h}}\tau^2\,((e_1+ ie_2)\cdot \nabla_{t,s})^2 e^{\frac{(at+bs)}{\h}}\big(e^{\frac{-i(bt-as)}{\h}}\rho(\cdot;\h)\big)  +\tau^2\, c(t,s)\big(e^{\frac{-i(bt-as)}{\h}}\rho(\cdot;\h)\big)\\[1mm]
& = - e^{-\frac{(at+bs)}{\h}}\tau^2\,\big((e_1+ ie_2)\cdot \nabla_{t,s}\big)^2 e^{\frac{(at+bs)-i(bt-as)}{\h}}b_0  +\tau^2\, c(t,s) e^{\frac{-i(bt-as)}{\h}}\,b_0.
\end{aligned}\end{equation}
Since 
\begin{equation}\begin{aligned}\label{tau1}
&	e^{-\frac{(at+bs)-i(bt-as)}{\h}}\,\tau^2\,((e_1+ ie_2)\cdot \nabla_{t,s})^2 e^{\frac{(at+bs)-i(bt-as)}{\h}}b_0 \\
&	= b_0\left[(e_1+ ie_2)\cdot \nabla_{t,s}\big((at+bs)-i(bt-as)\big)\right]^2
+ \h b_0\left((e_1+ie_2)\cdot\nabla_{t,s}\right)^2\big((at+bs)-i(bt-as)\big)\\
&\quad	+ 2\h \big[(e_1+ ie_2)\cdot \nabla_{t,s} b_0\big]\,\big[(e_1+ ie_2)\cdot\nabla_{t,s}\big((at+bs)-i(bt-as)\big)\big]
+\h^2((e_1+ ie_2)\cdot \nabla_{t,s})^2b_0,
\end{aligned}\end{equation}
where due to our choice of $\widetilde{\varphi}=(at+bs)$ and $\widetilde{\psi}=(bt-as)$ it turns out
\begin{align*}
\begin{cases}\big[(e_1+ ie_2)\cdot \nabla_{t,s}\big((at+bs)-i(bt-as)\big)\big]=(a-ib)+i(b+ia) =0,\\[2mm]
\left((e_1+ie_2)\cdot\nabla_{t,s}\right)^2\big((at+bs)-i(bt-as)\big) =0.
\end{cases}
\end{align*}
So the $\mathcal{O}(\tau^\alpha)$ terms for $\alpha=0,1$ in \eqref{tau1} are identically $0$.  
\vspace{5pt}

Thus from \eqref{tau1} and \eqref{tau0}, we get 
\begin{equation}\begin{aligned}\label{tau2}
&e^{-\frac{(at+bs)}{\h}}\tau^2\,((e_1+ ie_2)\cdot \nabla_{t,s})^2 e^{\frac{(at+bs)}{\h}}\underbrace{\big(e^{\frac{-i(bt-as)}{\h}}\rho(\cdot;\h)\big)}_{u\in L^2(\Sigma)}  +\tau^2\, c(t,s)\underbrace{\big(e^{\frac{-i(bt-as)}{\h}}\rho(\cdot;\h)\big)}_{u\in L^2(\Sigma)}\\[1mm]
& =  \underbrace{\tau^2\,\,e^{\frac{-i(t-s)}{\h}}\big[((e_1+ ie_2)\cdot \nabla_{t,s})^2b_0 + c\,b_0\big]}_{v\in L^2(\Sigma)}.
\end{aligned}\end{equation}
Now by Proposition \ref{ex_le},  the above equation \eqref{tau2} is solvable in $L^2(\Sigma)$. Hence there exists a $\rho\in L^2(\Sigma)$ with satisfying the estimate (cf. \eqref{tau_esti}): $\tau \|\rho\|_{L^2(\Sigma)} \leq \tau^2\, C$, where $C>0$ independent of $\tau>0$ small enough, or, 
\begin{equation}\label{tau_rho}
\|\rho\|_{L^2(\Sigma)} =\mathcal{O}(\tau), \qquad 0<\tau\ll 1. 
\end{equation}

\vspace{2pt}
\noindent
This establishes the CGO solution (cf.\eqref{ansatz2}) solving the transport equation \eqref{me2}, with the required decay estimate on the reminder term (cf.\eqref{tau_rho}). 

\vspace{5pt}
\noindent
Consequently, we have established the CGO solutions of the transport equation \eqref{nte1}, or \eqref{transportequation1}.

\begin{remark}[Regularity regarding \eqref{me2}]\label{reg}
It is evident from \eqref{tau2} that $((e_1+ ie_2)\cdot \nabla_{t,s})^2\rho(\cdot;\h)=\mathcal{O}(1)$ in $L^2(\Sigma)$. Since $((e_1+ ie_2)\cdot \nabla_{t,s})^{-1}:L^2(\Sigma)\mapsto L^2(\Sigma)$ is a continuous operator, so in particular $((e_1+ ie_2)\cdot \nabla_{t,s})\rho(\cdot;\h)\in L^2(\Sigma)$ and $\|((e_1+ ie_2)\cdot \nabla_{t,s})\rho(\cdot;\h)\|_{L^2}=\mathcal{O}(1)$.
We conclude that, $a_0$ (cf. \eqref{ansatz2}) solving \eqref{me2} is in $H^2(\Sigma)$ in a sense that $((e_1+ ie_2)\cdot \nabla_{t,s})^ka_0\in L^2(\Sigma)$ for $k=0,1,2$.
\end{remark}

Next, we would like to discuss the solvability of the transport equation \eqref{transportequation2}, i.e. 
\begin{equation}\label{nte2}\begin{aligned}
	&4((\mu_1+i\mu_2)\cdot\nabla)^2 a_1 + A (\mu_1+\I\mu_2)\cdot(\mu_1+\I\mu_2)\, a_1 \\
&= -4\big(((\mu_1+i\mu_2)\cdot\nabla)\circ \Delta\big) a_0 - \left(B\cdot(\mu_1+\I\mu_2)\right)a_0 \quad\mbox{in }\Omega
\end{aligned}\end{equation}
Comparing with \eqref{nte1}, the equation \eqref{nte2} stands as the non-homogeneous equation. In particular, we will be interested in solving 
\begin{equation}\label{me3} ((e_1+ ie_2)\cdot \nabla_{t,s})^2a_1 + c(t,s)a_1= f \quad\mbox{in }\Sigma\subset\mathbb{R}^2\end{equation}
where $f\in L^2(\Sigma)$ is some given non-zero function.

We can use the same methodology as before to prove the existence of the solution. Let us start with an ansatz $a_1 = e^{\frac{\widetilde{\vp}-\I\widetilde{\psi}}{\hh}} \left(b_1(x) + \rho_1(x,\hh)\right)$, where $0<\delta<\tau_1<1$ for some fixed $\delta>0$ small enough,  $\widetilde{\varphi},\widetilde{\psi}$ are as in \eqref{phipsi}, $b_1\in C^\infty(\overline{\Omega})$ is non-zero, and $\rho_1$ to be determined. Then substituting it in the equation \eqref{me3} we get
\begin{equation}\label{key_3}\begin{aligned}
&	e^{-\frac{\widetilde{\varphi}}{\hh}}((e_1+ ie_2)\cdot \hh\nabla_{t,s})^2	e^{\frac{\widetilde{\varphi}}{\hh}} \,(e^{-\frac{i\widetilde{\psi}}{\hh}}\rho_1)  + \hh^2 c(t,s)\, (e^{-\frac{i\widetilde{\psi}}{\hh}}\rho_1)\\
&	= -\hh^2 \,e^{\frac{-i(t-s)}{\h}}\big[((e_1+ ie_2)\cdot \nabla_{t,s})^2b_1 + c\,b_1\big] + f
\end{aligned}
\end{equation}
Using Proposition \ref{ex_le} we get $\rho_1\in L^2(\Sigma)$ solving \eqref{key_3} with $\|\rho_1\|_{L^2(\Sigma)}=\mathcal{O}(1)$ (depending on fixed $\delta>0$). Essentially, by the above Remark \ref{reg}, we can also conclude that the above equation \eqref{me3} is solvable in $H^2(\Sigma)$ in a sense that $((e_1+ ie_2)\cdot \nabla_{t,s})^ka_1\in L^2(\Sigma)$ for $k=0,1,2$. 

\vspace{3pt}
\noindent
Consequently, one establishes the existence of the solution of the transport equation \eqref{nte2}.
\begin{remark}[Regularity regarding \eqref{transportequation1}-\eqref{transportequation2}]\label{reg2}
Here we mention the regularity of $a_0$, $a_1$ solving \eqref{nte1} (or, \eqref{transportequation1}), \eqref{nte2} (or, \eqref{transportequation2}) respectively. Note that, we need $a_0, a_1 $ to be in $H^4(\O)$ (see Proposition \ref{solvibility}, Remark \ref{Rem_adjoint_op}). Now it is easy to see, for example the regularity of $a_0$ directly depends on the regularity of $A\in W^{3,\infty}(\Omega)$. Since $\partial_{x_l}a_0$, $l=1,..,n$ solves 
\begin{equation*}
4((\mu_1+ i\mu_2)\cdot \nabla)^2 \partial_{x_l}a_0 + A(x)(\mu_1+ i\mu_2)\cdot (\mu_1+ i\mu_2)\partial_{x_l}a_0 =-\partial_{x_l}A(x)(\mu_1+ i\mu_2)\cdot (\mu_1+ i\mu_2)a_0
\end{equation*}
hence we find $\partial_{x_l}a_0 \in H^2(\Omega)$ in a sense that $((\mu_1+ i\mu_2)\cdot \nabla)^k \partial_{x_l}a_0\in L^2(\Omega)$ for $k=0,1,2$, and repeating this process, our requirement of $a_0\in H^4(\O)$ would be fulfilled. In fact, $A\in  W^{3,\infty}(\Omega)$ gives $a_0\in H^5(\Omega)$.  Similarly, we conclude the same for $a_1$ to be in $H^4(\Omega)$, and to get that regularity, we use $a_0\in H^5(\Omega)$ and $B\in W^{2,\infty}(\Omega)$, as per right hand side of \eqref{nte2} appears in that form. This justifies our extra regularity assumption on the coefficients.   
\end{remark}
Summing up, we have 
\begin{proposition}\label{solvibility2}
	Let $\Omega\subset\mathbb{R}^n$ be a bounded domain. Let $\wt{A}, A^{\sharp}	\in W^{3,\infty}(\mathbb{R}^n,\mathbb{C}^{n^{2}}) \cap \mathcal{E}^{\prime}(\overline{\Omega})$ and consider the transport equations
	\begin{equation}\label{tr5}
	\begin{cases}
	4((\mu_1+i\mu_2)\cdot\nabla)^2\wt{a}_{0} +\wt{A}(x)(\mu_1+i\mu_2)\cdot(\mu_1+i\mu_2)\wt{a}_{0}=0 \mbox{ in }\Omega\\[4pt]
	4((-\mu_1+i\mu_2)\cdot\nabla)^2a^{\sharp}_{0} +A^{\sharp}(x)(-\mu_1+i\mu_2)\cdot(-\mu_1+i\mu_2)a^\sharp_{0}=0 \mbox{ in }\Omega.
	\end{cases}
	\end{equation}
	Then for all $\h >0$ small enough, there exists solutions $\widetilde{a}_0, a_0\in H^{4}(\Omega)$ of \eqref{tr5}  of the form
	\begin{equation}\label{aa}\begin{cases} 
	\wt{a}_0(x,\h) &= e^{\frac{(\widetilde{\vp}(x) - i\widetilde{\psi}(x))}{\h}}(\wt{b}(x) + \wt{\rho}(x;\h))\\[4pt]
	a^\sharp_0(x,\h) &= e^{\frac{(-\widetilde{\vp}(x) - i\widetilde{\psi}(x))}{\h}}(b^\sharp(x) + \rho^\sharp(x;\h))
	\end{cases}
	\end{equation}
	where $\widetilde{\varphi}$, $\widetilde{\psi}$ are real valued linear functions  as
	 \begin{equation*}\begin{cases}
	 \wt{\varphi}= a( \mu_1\cdot x) +b (\mu_2\cdot x)\\[3pt]
	 \wt{\psi}=b(\mu_1\cdot x)-a(\mu_2\cdot x)\end{cases}
	  \quad (a,b)\neq (0,0) \end{equation*}
	   satisfying 
	  \begin{equation}\label{Psi}
	  (\pm\mu_1+ i\mu_2)\cdot\nabla(\pm\widetilde{\vp}-i\widetilde{\psi})=0.
	  \end{equation}
	   The amplitude functions $\wt{b}, b^\sharp\in C^\infty(\overline{\O})$ are non-zero,
	and $\widetilde{\rho},\rho^\sharp\in H^4(\Omega)$ satisfies the estimate 
	\begin{equation}\label{rhoesti}
	\begin{cases}
\lVert \wt{\rho}(\cdot;\tau)\rVert_{L^2}, \lVert \rho^\sharp(\cdot;\tau)\rVert_{L^2}   = \Oc(\h)\\[1mm]
\|(\mu_1+ i\mu_2)\cdot\nabla \wt{\rho}(\cdot;\tau)\|_{L^2},  \|(\mu_1+ i\mu_2)\cdot\nabla \rho^{\#}(\cdot;\tau)\|_{L^2}=\mathcal{O}(1).
\end{cases}\quad 0<\tau\ll 1\end{equation}
\end{proposition}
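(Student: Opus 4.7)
My plan is to reduce the $n$-dimensional transport equations to the two-dimensional setting already handled in Proposition \ref{ex_le}, via the change of variables $x\mapsto(t,s,x')$ with $\mu_1\cdot\nabla=\partial_t$, $\mu_2\cdot\nabla=\partial_s$, and $x'\in\mathbb{R}^{n-2}$ parameterizing the orthogonal directions. For each fixed $x'$, the equation for $\wt{a}_0$ becomes
\[
((e_1+ie_2)\cdot\nabla_{t,s})^2 \wt{a}_0 + c(t,s,x')\,\wt{a}_0=0 \quad\text{in } \Sigma_{x'}\subset\mathbb{R}^2,
\]
with $c(t,s,x')=\tfrac14\wt{A}(t,s,x')(\mu_1+i\mu_2)\cdot(\mu_1+i\mu_2)\in L^\infty$. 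This is precisely the class of equations treated in \eqref{me2}, so I inherit the CGO construction there.

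First I would plug the ansatz $\wt{a}_0=e^{(\wt{\vp}-i\wt{\psi})/\tau}(\wt{b}+\wt{\rho})$ into the equation. Thanks to the harmonic conjugacy condition \eqref{Psi}, the computation \eqref{tau1} shows that the $\mathcal{O}(\tau^0)$ and $\mathcal{O}(\tau^1)$ contributions from the conjugated operator acting on $\wt{b}\in C^\infty(\overline{\Omega})$ vanish identically. What remains is an equation of the form
\[
e^{-\wt{\vp}/\tau}\tau^2((e_1+ie_2)\cdot\nabla)^2 e^{\wt{\vp}/\tau}\bigl(e^{-i\wt{\psi}/\tau}\wt{\rho}\bigr)+\tau^2 c\,\bigl(e^{-i\wt{\psi}/\tau}\wt{\rho}\bigr)=\tau^2 F(x;\wt{b},c),
\]
with a bounded right-hand side. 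Applying Proposition \ref{ex_le} slice by slice in $x'$ produces $\wt{\rho}(\cdot,x';\tau)\in L^2(\Sigma_{x'})$ with $\tau\|\wt{\rho}(\cdot,x';\tau)\|_{L^2(\Sigma_{x'})}\lesssim \tau^2$, uniformly in $x'$ because $\Omega$ is bounded and the Poincaré/Carleman constants in Proposition \ref{CAR} depend only on the diameter of the slice. Integrating in $x'$ and checking measurability (which follows from the explicit Hahn–Banach/Riesz construction) yields $\wt{\rho}\in L^2(\Omega)$ with $\|\wt{\rho}\|_{L^2(\Omega)}=\mathcal{O}(\tau)$. The construction for $a^\sharp_0$ is identical after replacing $\mu_1\mapsto-\mu_1$ and using the $T_{\wt{\vp},-}$ version of Proposition \ref{ex_le}.

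The remaining point is to upgrade $\wt{\rho}\in L^2$ to $\wt{a}_0\in H^4(\Omega)$, as required by Proposition \ref{solvibility} and Remark \ref{Rem_adjoint_op}. For this I would follow Remark \ref{reg2}: differentiating the transport equation in $x_l$ produces an equation of the same type for $\partial_{x_l}\wt{a}_0$ with right-hand side $-\partial_{x_l}\wt{A}(\mu_1+i\mu_2)\cdot(\mu_1+i\mu_2)\wt{a}_0$, which is square integrable whenever $\wt{A}\in W^{1,\infty}$ and $\wt{a}_0\in L^2$. Iterating this bootstrap up to four derivatives consumes exactly the $W^{3,\infty}$ regularity of $\wt{A}$. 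For the mixed derivative estimate $\|(\mu_1+i\mu_2)\cdot\nabla\wt{\rho}\|_{L^2}=\mathcal{O}(1)$, I would argue as in Remark \ref{reg}: the solvability equation gives $((e_1+ie_2)\cdot\nabla_{t,s})^2\wt{\rho}=\mathcal{O}(1)$ in $L^2$, and continuity of the inverse of $(e_1+ie_2)\cdot\nabla_{t,s}$ on $L^2(\Sigma_{x'})$ supplies one derivative. The main obstacle in the whole scheme is bookkeeping: organizing the iterated differentiation so that each step produces a right-hand side already covered by Proposition \ref{ex_le}, and keeping the $\tau$-decay consistent across the differentiated equations; everything else is essentially an application of results already proved in the section.
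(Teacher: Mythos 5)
Your proposal is correct and follows essentially the same route as the paper: reduction to the two--dimensional $\overline{\partial}$--type equation on the slices $\Sigma_{x'}$, the CGO ansatz for the amplitude with the harmonic conjugate pair $(\wt{\vp},\wt{\psi})$ killing the $\mathcal{O}(\tau^0)$ and $\mathcal{O}(\tau)$ terms, solvability of the remainder via Proposition \ref{ex_le} with $\|\wt{\rho}\|_{L^2}=\mathcal{O}(\tau)$, and the regularity bootstrap of Remarks \ref{reg} and \ref{reg2}. Your explicit attention to uniformity of the Carleman constants in $x'$ and measurability of the slice-wise construction is a point the paper leaves implicit, but it does not change the argument.
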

Next we present the result of solving the transport equations \eqref{transportequation2}. 
\begin{proposition}\label{solvibility3}
	Let $a_0(x) \in H^4(\O)$, $\wt{A}, A^{\sharp} \in W^{3,\infty}(\O:\mathbb{C}^{n^2})$ and $\wt{B}, B^{\sharp} \in W^{2,\infty}(\O:\mathbb{C}^{n})$.
	Then there exist $\wt{a}_1(x),a^\sharp_1(x) \in H^4(\O)$ satisfying
	\begin{equation}\label{transport_1}
	\begin{cases}
	&	4((\mu_1+i\mu_2)\cdot\nabla)^2 \wt{a}_1 + \wt{A} (\mu_1+\I\mu_2)\cdot(\mu_1+\I\mu_2)\, \wt{a}_1 \\
	&\qquad\qquad\qquad=  -4\big(((\mu_1+i\mu_2)\cdot\nabla)\circ \Delta\big) \wt{a}_0 - \left(B\cdot(\mu_1+\I\mu_2)\right)\wt{a}_0\quad\mbox{in }\Omega\\[6pt]
	&4((-\mu_1+i\mu_2)\cdot\nabla)^2 a^\sharp_1 +  A^{\sharp} (-\mu_1+\I\mu_2)(-\mu_1+\I\mu_2)\, a^\sharp_1 \\
	&\qquad\qquad\qquad=  -4\big(((-\mu_1+i\mu_2)\cdot\nabla)\circ \Delta\big) a^\sharp_0 - \left(B\cdot(-\mu_1+\I\mu_2)\right)a^\sharp_0\quad\mbox{in }\Omega.
	\end{cases}
	\end{equation}
\end{proposition}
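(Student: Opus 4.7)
The plan is to follow the methodology developed in the discussion preceding the proposition, treating \eqref{transport_1} as the inhomogeneous analogue of \eqref{tr5}. After the change of variables $x \mapsto (t,s,x')$ with $\mu_1\cdot\nabla = \partial_t$ and $\mu_2\cdot\nabla = \partial_s$, each equation of \eqref{transport_1} reduces, for each fixed $x'\in\mathbb{R}^{n-2}$, to a two-dimensional problem on $\Sigma_{x'}$ of the form \eqref{me3}, namely
\[
((e_1+ie_2)\cdot\nabla_{t,s})^2\wt{a}_1 + \tfrac{1}{4}\wt{A}(\mu_1+i\mu_2)\cdot(\mu_1+i\mu_2)\,\wt{a}_1 = f(t,s,x'),
\]
where $f$ collects the right-hand side of \eqref{transport_1}. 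Since $\wt{a}_0 \in H^5(\Omega)$ by Remark \ref{reg2}, both $\Delta\wt{a}_0$ and $((\mu_1+i\mu_2)\cdot\nabla)\Delta \wt{a}_0$ lie in $L^2(\Omega)$; together with $\wt{B}\in W^{2,\infty}$ this gives $f\in L^2(\Omega)$, hence $f(\cdot,\cdot,x')\in L^2(\Sigma_{x'})$ for a.e.\ $x'$.

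Next, as in the derivation culminating in \eqref{key_3}, I would substitute the CGO ansatz $\wt{a}_1 = e^{(\widetilde{\vp}-i\widetilde{\psi})/\tau_1}(b_1(x)+\rho_1(x;\tau_1))$ with $b_1\in C^\infty(\overline{\O})$ nonzero and $\tau_1\in(\delta,1)$ fixed for some small $\delta>0$. The conjugate harmonic choice \eqref{Psi} forces the $\mathcal{O}(\tau_1^0)$ and $\mathcal{O}(\tau_1)$ terms in the expansion to cancel, reducing the equation for $\rho_1$ to one of the form treated by Proposition \ref{ex_le}. That proposition delivers $\rho_1\in L^2(\Sigma_{x'})$ with $\|\rho_1\|_{L^2}=\mathcal{O}(1)$, and hence $\wt{a}_1\in L^2(\Omega)$. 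The construction of $a^\sharp_1$ is identical after replacing $\mu_1$ by $-\mu_1$ and using the adjoint coefficients $A^\sharp,B^\sharp$; Propositions \ref{CAR}, \ref{scaling_2}, and \ref{ex_le} are symmetric under the sign change.

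The main obstacle is upgrading the regularity from $L^2$ to $H^4$, which is needed for Proposition \ref{solvibility} and Remark \ref{Rem_adjoint_op}. Following the template of Remark \ref{reg2}, I would differentiate \eqref{transport_1} in $x_l$: each $\partial_{x_l}\wt{a}_1$ satisfies a transport equation of the same structure, with a new right-hand side involving $\partial_{x_l}\wt{A}$, $\partial_{x_l}\wt{B}$, and up to one additional derivative of $\wt{a}_0$. Since $\wt{A}\in W^{3,\infty}$, $\wt{B}\in W^{2,\infty}$ and $\wt{a}_0\in H^5$, these sources remain in $L^2$ under up to four differentiations in $x$, so iterating the solvability step four times places $((\mu_1+i\mu_2)\cdot\nabla)^k\partial^\alpha \wt{a}_1\in L^2(\Omega)$ for $|\alpha|\leq 4$ and $k=0,1,2$. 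Combined with the transverse smoothness inherited from the parameter dependence on $x'$, this yields $\wt{a}_1\in H^4(\Omega)$, and the same chain of differentiations gives $a^\sharp_1\in H^4(\Omega)$. This is precisely the regularity book-keeping that motivated the extra regularity assumptions on $\wt{A}$ and $\wt{B}$ in the statement.
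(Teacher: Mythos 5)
Your proposal follows essentially the same route as the paper: reduce to the two-dimensional problem \eqref{me3} on the slices $\Sigma_{x'}$, use the CGO ansatz $a_1=e^{(\widetilde{\vp}-i\widetilde{\psi})/\tau_1}(b_1+\rho_1)$ with $\tau_1$ fixed and small so that \eqref{Psi} kills the low-order terms, solve for $\rho_1$ via Proposition \ref{ex_le}, and then bootstrap to $H^4(\O)$ by differentiating the equation in $x_l$ exactly as in Remark \ref{reg2}. The only caveat is that your derivative count in the final bootstrap is as loose as the paper's own (four extra differentiations of a source containing $((\mu_1+i\mu_2)\cdot\nabla)\Delta\wt{a}_0$ strictly asks for more than $\wt{a}_0\in H^5$), but this matches the level of rigor of the paper's Remark \ref{reg2}.
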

\section{Determination of the coefficients}\label{determination}
In this section we use the special form of the solution $u$ to determine the coefficients $A$, $B$ and $q$ in $\O$.
We consider two sets of parameters $A, \wt{A} \in W^{3,\infty}(\Rb^n:\mathbb{C}^{n^2})\cap \mathcal{E}'(\overline{\Omega})$, $B,\wt{B} \in W^{2,\infty}(\Rb^n:\mathbb{C}^{n^2})\cap \mathcal{E}'(\overline{\Omega})$ and $q,\tilde{q} \in L^{\infty}(\Omega)$.
%Let us assume that $\Lc$ and $\wt{\Lc}$ be two operators given as \eqref{operator} corresponding to the parameters $A,B,q$ and $\wt{A},\wt{B},\wt{q}$ respectively.
Let $\mathcal{C}_{A,B,q}$ be the set of Cauchy data, given in \eqref{qw19}, corresponding to the operators $\Lc_{A,B,q}$. In this section we will show that if $\mathcal{C}_{A,B,q} = \mathcal{C}_{\wt{A},\wt{B},\wt{q}}$ on $\partial\O$ then $A=\wt{A}$, $B=\wt{B}$ and $q=\wt{q}$ in $\Omega$.

First let us extend our problem to a larger simply connected domain $\wt{\O}$.
Let $\wt{\O}\subset \Rb^n$ be a smooth, bounded, simply connected domain such that $\O \ssubset \wt{\O}$. Let us extend $q, \widetilde{q}$ as zero on $\wt{\O}\setminus \overline{\O}$ to  have $q,\wt{q} \in L^{\infty}(\wt{\O})$ with compact support inside $\wt{\O}$. And since by our assumption  $A, \wt{A} \in W^{3,\infty}(\Rb^n:\mathbb{C}^{n^2})\cap \mathcal{E}'(\overline{\Omega})$, $B,\wt{B} \in W^{2,\infty}(\Rb^n:\mathbb{C}^{n^2})\cap \mathcal{E}'(\overline{\Omega})$, so we can think $A,\wt{A} \in W^{3,\infty}(\wt{\O})$, $B,\wt{B}\in W^{2,\infty}(\wt{\O})$ with compact support inside $\wt{\O}$.
We extend the operator $\Lc$ over $\wt{\O}$ and denote them by the same notation.
\begin{proposition}\label{qw20}
	Let $\Omega \ssubset \wt{\O}$ be two bounded domains in $\mathbb{R}^n$ with smooth boundaries, and let $A,\wt{A} \in W^{3,\infty}(\wt{\O},\mathbb{C}^{n^{2}})$, $B,\wt{B} \in W^{2,\infty}(\wt{\O},\mathbb{C}^{n})$ and  $q,\, \wt{q} \in L^\infty(\O,\mathbb{C})$ satisfy $A_{jk} = \wt{A}_{jk}$, $B_j=\wt{B}_j$ and $q = \wt{q}$ in $\wt{\O}\setminus\O$ for all $j,k=1,\dots,n$.
	If the Cauchy data set (cf. \eqref{qw19}) $\mathcal{C}_{A,B,q}(\O) = \mathcal{C}_{\wt{A},\wt{B},\wt{q}}(\O)$, then $\mathcal{C}_{A,B,q}(\wt{\O}) = \mathcal{C}_{\wt{A},\wt{B},\wt{q}}(\wt{\O})$.
\end{proposition}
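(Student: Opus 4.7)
The plan is to establish $\mathcal{C}_{\wt A,\wt B,\wt q}(\wt\O) = \mathcal{C}_{A,B,q}(\wt\O)$ by a gluing argument, and by symmetry it suffices to prove one inclusion. Given any $\wt u\in H^4(\wt\O)$ solving $\wt\Lc\wt u = 0$ in $\wt\O$, the goal is to produce $u\in H^4(\wt\O)$ with $\Lc u = 0$ in $\wt\O$ and the same Navier trace on $\partial\wt\O$. First I would restrict $\wt u$ to $\O$: then $\wt u|_\O\in H^4(\O)$ solves $\wt\Lc \wt u|_\O=0$ in $\O$, so its Navier Cauchy data on $\partial\O$ lies in $\mathcal{C}_{\wt A,\wt B,\wt q}(\O) = \mathcal{C}_{A,B,q}(\O)$. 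By hypothesis there exists $v\in H^4(\O)$ with $\Lc v = 0$ in $\O$ whose Navier Cauchy data on $\partial\O$ equals that of $\wt u|_\O$. Define
\[
u := \begin{cases} v &\text{in }\O,\\ \wt u &\text{in }\wt\O\setminus\overline\O.\end{cases}
\]

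The next step is to show that $u\in H^4(\wt\O)$ and that $\Lc u=0$ in $\wt\O$. The matching of $u$ and $\partial_\nu u$ across $\partial\O$ forces all their tangential derivatives to agree on $\partial\O$ as well. Writing the Laplacian in a tubular neighborhood of $\partial\O$ in the form $\Delta = \partial_\nu^2 + H\partial_\nu + \Delta_\tau$, where $H$ is the mean curvature of the level surfaces of the signed distance and $\Delta_\tau$ is the induced tangential Laplacian, the agreement of $\Delta u$ on $\partial\O$ yields the agreement of $\partial_\nu^2 u$; similarly, the agreement of $\partial_\nu\Delta u$, combined with already-matched quantities and their tangential derivatives, yields the agreement of $\partial_\nu^3 u$. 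Hence the full Dirichlet Cauchy data $(u,\partial_\nu u,\partial_\nu^2 u,\partial_\nu^3 u)$ coincide on $\partial\O$ from both sides, which is the standard criterion for $u\in H^4(\wt\O)$. On $\O$ we have $\Lc u = \Lc v = 0$ by construction; on $\wt\O\setminus\overline\O$, since $A=\wt A$, $B=\wt B$ and $q=\wt q$ there, $\Lc u = \wt\Lc \wt u = 0$. Because $u\in H^4(\wt\O)$, no distributional jump terms supported on $\partial\O$ are produced by $\Lc u$, so $\Lc u = 0$ throughout $\wt\O$.

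Finally, since $u\equiv \wt u$ in a neighborhood of $\partial\wt\O$, their Navier Cauchy data on $\partial\wt\O$ coincide, giving $\mathcal{C}_{\wt A,\wt B,\wt q}(\wt\O)\subset \mathcal{C}_{A,B,q}(\wt\O)$; reversing the roles of the two operators yields the opposite inclusion. The main technical point, and the only place where the specific shape of the Navier (rather than Dirichlet) Cauchy data enters, is the recovery of $\partial_\nu^2 u$ and $\partial_\nu^3 u$ on $\partial\O$ from the matched quantities $(u,\partial_\nu u,\Delta u, \partial_\nu\Delta u)$ via the normal-tangential splitting of $\Delta$; this is routine in smooth local coordinates but must be done carefully so that the resulting glued $u$ genuinely lies in $H^4(\wt\O)$ and not merely in $H^3$.
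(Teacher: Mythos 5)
Your gluing argument is correct and is precisely the standard proof that the paper itself omits, merely citing \cite{KRU1,SU}: restrict, use equality of the Cauchy data sets on $\partial\O$ to find the matching interior solution, recover the agreement of $\partial_\nu^2 u$ and $\partial_\nu^3 u$ from the Navier data via the normal--tangential splitting of $\Delta$, glue to get an $H^4(\wt\O)$ solution, and read off unchanged Cauchy data on $\partial\wt\O$. No discrepancies with the intended argument.
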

The proof of the above proposition is standard in the literature of Calder\'on type inverse problems and can be found in \cite{KRU1,SU}.

\subsection{Integral identity involving the coefficients}
We recall that
\[\Lc(x,D) =
(-\Delta)^2 + \sum_{j,k=1}^{n} A_{jk}(x)D^{j}D^{k} + \sum_{j=1}^{n} B_{j}(x)D^{j} + q(x),
\]
where $A_{jk}\in W^{3,\infty}(\wt{\Omega},\mathbb{C}^{n^{2}})$, $B_{j}\in W^{2,\infty}(\wt{\Omega},\mathbb{C}^n)$ and $q \in L^{\infty}(\wt{\Omega},\mathbb{C})$.

\noindent
We have the following integral identity 
\begin{equation}\label{integralidentity}
\int_{\wt{\Omega}} \left(\Lc(x,D)u\right)\overline{v} \D x - \int_{\wt{\Omega}} u\overline{\Lc^{*}(x,D)v}\D x = 0, \quad \forall u\in H^{4}_0(\wt{\O}), v\in H^{4}(\wt{\O}),
\end{equation}
where $H^{4}_0(\wt{\Omega})$ is the closure of $C_0^\infty(\wt{\Omega})$ functions in $H^{4}(\wt{\Omega})$ norm.
Let $u,\wt{u}\in H^{4}(\wt{\Omega})$ solve
\begin{equation}\begin{aligned}
\Lc_{A,B,q}(x,D)u &=0\quad\mbox{in }\wt{\Omega}\qquad \quad
\mbox{and}\quad
\Lc_{\wt{A},\wt{B},\wt{q}}(x,D)\widetilde{u} =0 \quad\mbox{in }\wt{\Omega},\\
\mbox{with }\quad (-\Delta)^l u|_{\partial\wt{\Omega}} &= (-\Delta)^l \widetilde{u}|_{\partial\wt{\Omega}},\quad \mbox{for }l=0,1.
\end{aligned}
\end{equation}
From the assumption of the Theorem \ref{mainresult} and Proposition \ref{qw20} on $\partial\wt{\O}$ we now get 
\begin{equation}
\partial_\nu(-\Delta )^{l}u = \partial_\nu(-\Delta )^{l}\widetilde{u},\qquad\mbox{for }l=0,1.
\end{equation}
So we have $(u-\widetilde{u})\in H^{4}_0(\wt{\Omega})$.

Let $v\in H^{4}(\wt{\Omega})$ satisfies $\Lc^{*}_{A,B,q}(x,D)v = 0 $ in $\wt{\Omega}$,
then from the integral identity \eqref{integralidentity} we get
\begin{equation}\begin{aligned}\label{integralidentityE}
&\left\langle \Lc_{A,B,q}(x,D)(\wt{u}-u),v\right\rangle_{L^2(\wt{\O})}\\
&=\int_{\wt{\Omega}}\lb \sum_{j,k=1}^{n}(A_{jk}-{\wt{A}_{jk}})D^{j}D^{k}\widetilde{u} + \sum_{j,k=1}^{n}(B_{j}-{\wt{B}_{j}})D^{j}\widetilde{u} + (q-\widetilde{q})\widetilde{u}\rb\overline{v}\,\D x
= 0
\end{aligned}\end{equation}
Next we choose $\wt{u}$ and $v$ to be the C.G.O. type solutions constructed in Section \ref{ccs}.
We choose $\varphi=\mu_1\cdot x$ and $\psi=\mu_2\cdot x$ for $\wt{u}$ and $\varphi=-\mu_1\cdot x$ and $\psi=\mu_2\cdot x$ for $v$, where  
$\mu_1,\mu_2 \in \mathbb{R}^n$ satisfying $|\mu_1|=|\mu_2|=1$ and $\mu_1\cdot\mu_2=0$. For $h>0$ small enough, we set the solutions are of the form
\begin{equation}
\label{CGO2}
\begin{cases}
\widetilde{u}(x)=e^{\frac{\mu_1\cdot x+ i\mu_2\cdot x}{h}} \lb\wt{a}_{0}(x,\mu_1,\mu_2)+h\wt{a}_{1}(x,\mu_1,\mu_2)+\widetilde{r}(x,\mu_1+ i\mu_2;h)\rb\quad&\mbox{in }\wt{\O},\\
v(x)=e^{\frac{-\mu_1\cdot x+i\mu_2\cdot x}{h}} \lb a^\sharp_{0}(x,-\mu_1,\mu_2)+a^\sharp_{1}(x,-\mu_1,\mu_2) +r^\sharp(x,-\mu_1+i\mu_2;h)\rb\quad&\mbox{in }\wt{\O}.
\end{cases}
\end{equation}
The amplitudes $\wt{a}_{l}(\cdot,\mu_1,\mu_2), a^{\sharp}_{l}(\cdot,-\mu_1,\mu_2)\in H^4({\wt{\O}})$, for $l=0,1$ satisfy the transport equations 
\begin{equation*}
\begin{aligned}
&\begin{cases}
4((\mu_1+i\mu_2)\cdot\nabla)^2\wt{a}_{0} +\wt{A}(x)(\mu_1+i\mu_2)\cdot(\mu_1+i\mu_2)\wt{a}_{0}=0\quad\mbox{in }\Omega,  \\[4pt]
4((-\mu_1+i\mu_2)\cdot\nabla)^2a^\sharp_{0} +A^{\sharp}(x)(-\mu_1+i\mu_2)\cdot(-\mu_1+i\mu_2)a^\sharp_{0}=0\quad\mbox{in }\Omega,
\end{cases}
\\[2mm]
&\begin{cases}
&	4((\mu_1+i\mu_2)\cdot\nabla)^2 \wt{a}_1 + \widetilde{A} (\mu_1+\I\mu_2)\cdot(\mu_1+\I\mu_2)\, \wt{a}_1 \\
&\qquad\qquad\qquad=  -4\big(((\mu_1+i\mu_2)\cdot\nabla)\circ \Delta\big) \wt{a}_0 - \left(\wt{B}\cdot(\mu_1+\I\mu_2)\right)\wt{a}_0\quad\mbox{in }\Omega,\\[6pt]
&4((-\mu_1+i\mu_2)\cdot\nabla)^2 a^\sharp_1 +  A^{\sharp} (-\mu_1+\I\mu_2)(-\mu_1+\I\mu_2)\, a^\sharp_1 \\
&\qquad\qquad\qquad=  -4\big(((-\mu_1+i\mu_2)\cdot\nabla)\circ \Delta\big) a^\sharp_0 - \left(B^\sharp \cdot(-\mu_1+\I\mu_2)\right)a^\sharp_0\quad\mbox{in }\Omega.
\end{cases}
\end{aligned}
\end{equation*}
along with 
\begin{equation*}
\label{eq_remainder_r_j}
\|\wt{r}\|_{H^{4}_{\textrm{scl}}},\, \|r^\sharp\|_{H^{4}_{\textrm{scl}}}=\mathcal{O}(h^2).
\end{equation*}

Now substituting \eqref{CGO2} in \eqref{integralidentityE} we get
\begin{equation}\label{CGOIdentityE}
\begin{aligned}
0=&\sum_{j,k=1}^{n} \frac{-1}{h^2} \int_{\wt{\Omega}} (A_{jk}-{\wt{A}_{jk}})(\mu_1+ i\mu_2)_{j}(\mu_1+ i\mu_2)_{k}
\lb \wt{a}_{0} +h\wt{a}_1+ \wt{r}\rb\overline{\lb a^\sharp_{0}+ha^\sharp_1 +r^\sharp\rb }\ \D x\\
&+\sum_{j,k=1}^{n} \frac{-i}{h}\int_{\wt{\Omega}} (A_{jk}-{\wt{A}_{jk}})(\mu_1+ i\mu_2)_{j}
D^{k}\lb \wt{a}_{0} +h\wt{a}_1+ \wt{r}\rb\overline{\lb a^\sharp_{0}+ha^\sharp_1 +r^\sharp\rb }\ \D x\\
&+\sum_{j,k=1}^{n} \int_{\wt{\Omega}} (A_{jk}-{\wt{A}_{jk}})
\lb D^{j}D^{k} (\wt{a}_{0}+h\wt{a}_1+\wt{r})\rb\overline{\lb a^\sharp_{0}+ha^\sharp_1 +r^\sharp\rb }\ \D x\\
&+\sum_{j=1}^{n} \frac{-i}{h}\int_{\wt{\Omega}} (B_{j}-{\wt{B}_j})(\mu_1+ i\mu_2)_{j}
\lb \wt{a}_{0}+h\wt{a}_1+\wt{r}\rb\overline{\lb a^\sharp_{0}+ha^\sharp_1 +r^\sharp\rb }\ \D x\\
&+\sum_{j=1}^{n} \int_{\wt{\Omega}} (B_{j}-{\wt{B}_{j}})\left(D^{j}
\lb \wt{a}_{0} +h\wt{a}_1 + \wt{r}\rb\right)\overline{\lb a^\sharp_{0}+ha^\sharp_1 +r^\sharp\rb }\ \D x \\
&+\int_{\wt{\Omega}} (q-\widetilde{q})\lb \wt{a}_{0} + h\wt{a}_1
+\wt{r}\rb \overline{\lb a^\sharp_{0}+ha^\sharp_1 +r^\sharp\rb }\ \D x,
\end{aligned}
\end{equation}
where $(\mu_1+i \mu_2)_l$ is the $l$-th component of the vector $(\mu_1+i\mu_2) \in \Cb^n$.
We assume that $\left(A-\wt{A}\right)$ is not an isotropic matrix. If it is isotropic, then the first term in \eqref{CGOIdentityE} vanishes immediately. Multiplying \eqref{CGOIdentityE} by $h^{2}$ and letting $h\to 0$ we get
\begin{equation}\label{iad}
\begin{aligned}
\sum_{j,k=1}^{n} \int_{\wt{\Omega}} \left(A_{jk}-{\wt{A}_{jk}}\right)(\mu_1+ i\mu_2)_{j}(\mu_1+ i\mu_2)_{k}
\,\wt{a}_{0}\,\overline{a^\sharp_{0}}\ \D x=0.
\end{aligned}
\end{equation}
This follows from the fact that $A,\wt{A}\in W^{3,\infty}(\wt{\Omega})$; $B,\wt{B}\in W^{2,\infty}(\wt{\Omega})$, $\wt{a}_0,\, a^\sharp_0,\, \wt{a}_1,\, a^\sharp_1\in H^4(\overline{\wt{\O}})$, and the fact $\|\wt{r}\|_{H^{4}_{\textrm{scl}}},\, \|r^\sharp\|_{H^{4}_{\textrm{scl}}}
=\mathcal{O}(h^2)$. We use the later fact to obtain $\|\wt{r}\|_{L^2},\, \|r^\sharp\|_{L^2} =\mathcal{O}(h^2)$, $\|D^{\beta}\wt{r}\|_{L^2},\, \|D^{\beta}r^\sharp\|_{L^2}=\mathcal{O}(h)$, for $|\beta|=1$ and $\| D^{\A}\wt{r}\|_{L^2},\, \|D^{\A}r^\sharp\|_{L^2}=\mathcal{O}(1)$, for $|\A|=2$.
\subsection*{Determining the difference $(A-\widetilde{A})$ up-to isotropic matrix}
A priori we do not assume $\left(A-\wt{A}\right)$ be an isotropic matrix, but then we show here that the identity \eqref{iad} forces the difference to be isotropic. Let us  
begin with the identity \eqref{iad}.
We plug the expression of $\wt{a}_0$ and $a^\sharp_0$ given in \eqref{aa}, to find 
\begin{equation*}
\sum_{j,k=1}^{n}\int_{\wt{\Omega}} \left(A_{jk}-{\wt{A}_{jk}}\right)(\mu_1+ i\mu_2)_{j}(\mu_1+ i\mu_2)_{k}
\,(\widetilde{b}(x) + \widetilde{\rho}(x;\h))(\overline{b^\sharp}(x) + \overline{\rho^\sharp}(x;\h))\, \D x=0.
\end{equation*}
Then by taking $\h\to 0$ and using  $\lVert \wt{\rho}\rVert_{L^2}, \lVert \rho^\sharp\rVert_{L^2}   = \Oc(\h)$,  we obtain the limiting identity as 
\begin{equation}\label{limitingidentityE_1}
\sum_{j,k=1}^{n} \int_{\wt{\Omega}} \left(A_{jk}-{\wt{A}_{jk}}\right)(\mu_1+ i\mu_2)_{j}(\mu_1+ i\mu_2)_{k}
\,\widetilde{b}(x)\,\overline{b^\sharp}(x)\, \D x=0,
\end{equation}
where $\wt{b}\in C^\infty(\overline{\Omega})$ and $b^\sharp\in C^\infty(\overline{\Omega})$  are non-zero complex amplitude functions.  
\begin{remark}
In particular, we will be choosing $\wt{b}, b^\sharp\in C^\infty(\overline{\Omega})$ as the non-zero solutions of the homogeneous equations:
\[\begin{cases}\big((\mu_1+i\mu_2)\cdot\nabla\big)^2 \widetilde{b}= 0\quad\mbox{ in }\Omega\\[4pt]
\big((-\mu_1+i\mu_2)\cdot\nabla\big)^2b^\sharp= 0\quad\mbox{ in }\Omega.
\end{cases}
\]
\end{remark}
Now let us choose $b^\sharp=1$ and $\widetilde{b}=e^{-ix\cdot\xi}$ where $\mu_1\perp\mu_2\perp\xi$. 
So, from the above identity we obtain
\begin{equation}\label{fourier}
\sum_{j,k=1}^{n} \int_{\wt{\Omega}} \left(A_{jk}-{\wt{A}_{jk}}\right)(\mu_1+ i\mu_2)_{j}(\mu_1+ i\mu_2)_{k}
\,e^{-ix\cdot\xi}\, \D x=0. 
\end{equation} 
The above identity holds for all non-zero vectors $\mu_1$, $\mu_2$, $\xi$ in $\mathbb{R}^n$, where $|\mu_1|=|\mu_2|$ and $\mu_1\perp\mu_2\perp\xi$.

Let us recall that $A,\wt{A},B,\wt{B},q,\wt{q}$ zero on $\wt{\O}\setminus\overline{\O}$. We further extend $A,\wt{A},B,\wt{B},q,\wt{q}$ by $0$ outside $\wt{\O}$ to all over  $\mathbb{R}^n$. Then \eqref{fourier} reads
\begin{equation}\label{fourier_1}
\sum_{j,k=1}^{n} \int_{\Rb^n} \left(A_{jk}-{\wt{A}_{jk}}\right)(\mu_1+ i\mu_2)_{j}(\mu_1+ i\mu_2)_{k}\,e^{-ix\cdot\xi}\, \D x=0. 
\end{equation}
To this end we closely follow the arguments in \cite{BG19}.
Let us fix $\xi \in \Rb \setminus \{0\}$ and consider the orthonormal basis $\Bb$ of $\Rb^n$ as
\[	\Bb := \Big\{\mu_1,\mu_2,\dots,\mu_{n-1},\frac{\xi}{|\xi|}\Big\}.
\]
Following \cite{BG19} we have a unique decomposition of the symmetric 2-tensor field $(A-\widetilde{A})$ in $\tilde{\Omega}$ as
\begin{equation}\label{A-tildeA}
(A - \widetilde{A}) = F +dV,\quad \mbox{where }\sum_{j=1}^{n} \partial_{x_j}F_{jk} = 0, \quad \forall\ k = 1,2,\dots,n,
\end{equation}
and $V=(V_1,V_2,\dots,V_n)$ is a smooth 1-form in $\wt{\Omega}$ with $V|_{\partial\Omega} = 0$.
Substituting the form of $(A-\wt{A})$ in \eqref{fourier_1} and using integration by parts we directly obtain
\begin{equation}\label{fourier_2}
\sum_{j,k=1}^{n}\int_{\Rb^n} F_{jk}(\mu_p+ i\mu_l)_{j}(\mu_p+ i\mu_l)_{k}
\,e^{-ix\cdot\xi}\, \D x=0, 
\end{equation}
for $\mu_p,\mu_l \in \Bb$, $l,p=1,\dots,n-1$ and $p\neq l$. 
Therefore, we get
\begin{equation}\label{eigen_1}
\widehat{F}_{jk}(\xi)(\mu_p+ i\mu_l)_{j}(\mu_p+ i\mu_l)_{k} = 0\quad  \mbox{for } p,l=1,\dots,n-1, \quad p\neq l.
\end{equation}
Replacing $\mu_l$ by $-\mu_l$ in $\Bb$ we get
\begin{equation}\label{eigen_2}
\widehat{F}_{jk}(\xi)(\mu_p-i\mu_l)_{j}(\mu_p-i\mu_l)_{k} = 0\quad  \mbox{for } p,l=1,\dots,n-1, \quad p\neq l.
\end{equation}
From \eqref{eigen_1} and \eqref{eigen_2} we directly obtain
\begin{equation}\label{eigen_3}
\begin{cases}
\langle\widehat{F}(\xi)\mu_1,\mu_1\rangle = \langle\widehat{F}(\xi)\mu_l,\mu_l\rangle, \quad &\mbox{for }l=2,3,\dots,n-1,\\
\langle\widehat{F}(\xi)\mu_p,\mu_l\rangle = 0 \quad &\mbox{for }l,p=2,3,\dots,n-1, \quad l\neq p,\\
\sum_{j=1}^{n} \widehat{F}_{jk}\xi_j = 0, \quad &\mbox{for } k=1,2,\dots,n.
\end{cases}
\end{equation}
Writing $d(\xi) = \langle\widehat{F}(\xi)\mu_1,\mu_1\rangle$ we see
$\wh{F}(\xi) = P^t D P$, where $D = diag(d(\xi),d(\xi),\dots,d(\xi),0)$ and
\[	P^t = \left( \mu_1 \quad \mu_2 \quad \dots \quad \mu_{n-1}\quad \frac{\xi}{|\xi|}\right).
\]
Using this we can write a formal expression of $\wh{F}(\xi)$ for $\xi\in \Rb^n\setminus\{0\}$ as
\begin{equation}\label{expression}
\wh{F}(\xi) = d(\xi) \left(I - \frac{\xi\otimes\xi}{|\xi|^2}\right), \quad \xi \in \Rb^n\setminus\{0\}.
\end{equation}
Consequently, in $x$-variable we get
\[	F_{jk}(x) = d_{\#}(x)\delta_{jk} + R_jR_k(d_{\#}(x)),
\]
where $d_{\#} \in L^2(\Rb^n)$ with $\wh{d_{\#}}(\xi) = d(\xi)$ and $R_j$ are the classical Riesz transformation defined as $\wh{R_jf}(\xi)=\frac{1}{i}\frac{\xi_j}{|\xi|}\wh{f}(\xi)$, for $f \in L^2(\Rb^n)$. Observe that $d_{\#}$ is compactly supported in $\O$, (see \cite[Equation 2.34]{BG19}). Let $\wt{d}(x) \in H^2_0(\Rb^n)$ solving $-\Delta \wt{d} = d_{\#}$ in $\Rb^n$. Then by using standard definition of Riesz transform we get
\[	F_{jk}(x) = d_{\#}(x)\delta_{jk} + \frac{1}{2}\left[\frac{\partial}{\partial x_j}\left(\frac{\partial\wt{d}}{\partial x_k}\right)+\frac{\partial}{\partial x_k}\left(\frac{\partial\wt{d}}{\partial x_j}\right)\right], \quad \mbox{in }\Rb^n.
\]
Therefore, from \eqref{A-tildeA} we get
\begin{equation}\label{A-tildeA_1}
\left(A-\wt{A}\right)_{jk} = d_{\#}(x)\delta_{jk} + \frac{1}{2}\left[\frac{\partial}{\partial x_j}\wt{V}_k+\frac{\partial}{\partial x_k}\wt{V}_j\right], \quad \mbox{in }\Rb^n,
\end{equation}
where $\wt{V} = \nabla_x\wt{d} + V \in H^1_0(\wt{\O})$.

Substituting this form of $(A-\widetilde{A})$ back in \eqref{limitingidentityE_1} and using the fact that $(\mu_1+i\mu_2)\cdot(\mu_1+i\mu_2)=0$ we get
\begin{equation*}%\label{limitingidentityE_5}
\sum_{j,k=1}^{n} \int_{\wt{\Omega}} \left[\frac{\partial}{\partial x_j}\wt{V}_k + \frac{\partial}{\partial x_k}\wt{V}_j\right](\mu_1+ i\mu_2)_{j}(\mu_1+ i\mu_2)_{k}
\,\widetilde{b}(x)\overline{b}(x)\, \D x=0.
\end{equation*}
Using integration by parts and the fact that $\wt{V}|_{\partial\wt{\Omega}}=0$ we obtain
\begin{equation*}%\label{limitingidentityE_5}
\sum_{j,k=1}^{n} \int_{\wt{\Omega}} \wt{V}_k (\mu_1+ i\mu_2)_{k}(\mu_1+ i\mu_2)_{j}\,\frac{\partial}{\partial x_j}\left(\widetilde{b}(x)\overline{b}(x)\right)\, \D x=0.
\end{equation*}
Now, we choose $\wt{b}(x) = e^{-ix\cdot\xi}(\mu_1\cdot x)$, $b^\sharp(x)=1$ in $\wt{\Omega}$ and see that
\begin{equation}\label{limitingidentityE_5.1}
\sum_{k=1}^{n} \int_{\wt{\Omega}} e^{-ix\cdot\xi}\,(\mu_1+ i\mu_2)_{k}\wt{V}_k(x)\, \D x=0.
\end{equation}
Observe that we can replace $\mu_2$ by $-\mu_2$ and using a similar analysis we obtain
\begin{equation}\label{limitingidentityE_5.2}
\sum_{k=1}^{n} \int_{\wt{\Omega}} e^{-ix\cdot\xi}\,(\mu_1- i\mu_2)_{k}\wt{V}_k(x)\, \D x=0.
\end{equation}
Adding \eqref{limitingidentityE_5.1} and \eqref{limitingidentityE_5.2} we get
\begin{equation}\label{limitingidentityE_5}
\int_{\wt{\Omega}} e^{-ix\cdot\xi}\,\left(\mu\cdot\wt{V}(x)\right)\, \D x=0, \quad \mbox{for all }\mu\in \Rb^n\setminus\{0\}\mbox{ perpendicular to }\xi.
\end{equation}
As $\wt{V}=0$ outside $\wt{\Omega}$ we can realise the above integration over $\Rb^n$.
Choosing $\mu=(-\xi_2,\xi_1,0,\dots,0)\in\Rb^n$ and evaluating the Fourier transform in \eqref{limitingidentityE_5} we see
\begin{equation}\label{V}
(d\wt{V})_{jk} := \partial_{x_k}\wt{V}_j - \partial_{x_j}\wt{V}_k = 0, \quad \mbox{in }\Rb^n,
\end{equation}
where $d$ is the exterior derivative. Having $\wt{\Omega}$ to be simply connected we obtain $p \in H^2(\wt{\Omega})$ such that
\begin{equation}\label{V_1}
\wt{V}(x) = \nabla p(x), \quad \mbox{in }\wt{\Omega}.
\end{equation}
Since, $\wt{V}|_{\wt{\Omega}}=0$ so we get $\nabla_{tan} p = \nabla p - (\partial_{\nu}p)\nu = 0$ on $\partial\wt{\Omega}$ and thus $p|_{\partial\wt{\Omega}}=c$ for some constant $c \in \Rb$.
Replacing $p$ by $p-c$ in $\wt{\Omega}$ we get $\wt{V} = \nabla p$ with $p|_{\partial\wt{\Omega}}=0=\partial_{\nu}p|_{\partial\wt{\Omega}}$. The normal derivative of the function $p$ vanishes on the boundary as a consequence of the fact the $\wt{V}=\nabla p$ vanishes on the boundary.
Therefore, summarizing the above analysis we get
\begin{equation}\label{key_2}
(A-\widetilde{A})_{jk} = d_{\#}(x)\delta_{jk} + \frac{\partial^2 p}{\partial x_j \partial x_k}, \quad \mbox{in }\wt{\Omega};\quad \mbox{with }d_{\#}\in L^2(\wt{\Omega}) \mbox{ and }p \in H^2_0(\wt{\Omega}).
\end{equation} 

Again going back to \eqref{limitingidentityE_1} and substituting the form of $(A-\widetilde{A})$ there, we get
\begin{equation}\label{A_1}
\sum_{j,k=1}^{n}\int_{\wt{\Omega}} (\mu_1+ i\mu_2)_j\frac{\partial^2 p}{\partial x_j\partial x_k} (\mu_1+ i\mu_2)_k
\,\wt{b}(x)\, \overline{b^{\sharp}(x)}\ \D x = 0.
\end{equation}
We get rid of the part of $(A-\widetilde{A})$ contributed by the function $d_{\#}$ by using the fact $(\mu_1+i\mu_2)\cdot(\mu_1+i\mu_2)=0$. Since $p|_{\partial\wt{\Omega}} = 0 = \partial_{\nu} p|_{\partial\wt{\Omega}}$, using integration by parts we obtain
\begin{equation}\label{A_2}
\int_{\wt{\Omega}} p(x)\, \left((\mu_1+ i\mu_2)\cdot\nabla\wt{b}(x)\right)\left((\mu_1+ i\mu_2)\cdot\nabla\overline{b^\sharp(x)}\right)\ \D x = 0.
\end{equation}
We choose $\wt{b}(x) = e^{-ix\cdot\xi}(\mu_1\cdot x)$, $b^\sharp(x) = \mu_1\cdot x$ in $\wt{\Omega}$ and obtain
\[	\int_{\wt{\Omega}} e^{-ix\cdot\xi} p(x)\, \D x = 0.
\]
Thus, varying $\xi$ we finally obtain $p=0$ and hence \begin{equation}\label{key}
(A-\widetilde{A})(x)=d_{\#}(x) I 
\end{equation}
Since $(A-\widetilde{A})\in W^{3,\infty}(\wt{\Omega})$ with $(A-\widetilde{A})=0$ in $\widetilde{\Omega}\setminus\overline{\Omega}$. So $d_{\#}\in W^{3,\infty}(\wt{\Omega})$ with $d_{\#}=0$ in $\widetilde{\Omega}\setminus\overline{\Omega}$. 
\vspace{1mm}
\subsection*{Determining the first order perturbation $B=\widetilde{B}$}
Writing $(A-\widetilde{A})=d_{\#}(x) I$ in \eqref{CGOIdentityE} we get
\begin{equation}\label{int_A_2}
\int_{\wt{\Omega}} d_{\#}(x)\left((\mu_1+ i\mu_2)\cdot\nabla
\wt{a}_{0}(x)\right)\, \overline{a^\sharp_0(x)}\ \D x 
+ \int_{\wt{\Omega}} \left(B-\widetilde{B}\right)\cdot(\mu_1+ i\mu_2)\,\wt{a}_{0}(x)\, \overline{a^\sharp_{0}(x)}\ \D x = 0.
\end{equation}
Next by substituting the form of the amplitudes in \eqref{int_A_2} we obtain
\begin{equation}\label{int_A_4}
\begin{aligned}
0=&	\int_{\wt{\Omega}} d_{\#}(x)\left((\mu_1+ i\mu_2)\cdot\nabla \wt{a}_{0}(x)\right)\, \overline{a^\sharp_0(x)}\ \D x +\int_{\wt{\Omega}} \left(B-\widetilde{B}\right)\cdot(\mu_1+ i\mu_2)\,\wt{a}_{0}(x)\, \overline{a^\sharp_0(x)}\ \D x \\
=& \frac{1}{\h}\int_{\wt{\Omega}} d_{\#}(x)\left[(\mu_1+ i\mu_2)\cdot\nabla(\widetilde{\vp}-i\widetilde{\psi})\right]\, \left(\wt{b}(x)+\wt{\rho}(x;\tau)\right)\overline{\left(b^\sharp(x)+\rho^\sharp(x;\tau)\right)}\ \D x\\
&+\int_{\wt{\Omega}} d_{\#}(x)\left[(\mu_1+ i\mu_2)\cdot\nabla\wt{b}(x)\right]\, \overline{b^\sharp(x)}\ \D x
+ \int_{\wt{\Omega}} d_{\#}(x)\left[(\mu_1+ i\mu_2)\cdot\nabla\wt{b}(x)\right]\, \overline{\rho^\sharp(x;\tau)}\ \D x\\
&-\int_{\wt{\Omega}} \wt{\rho}(x;\tau)\,\left[(\mu_1+ i\mu_2)\cdot\nabla (d_{\#}(x)\overline{b^\sharp(x)})\right] \ \D x
+ \int_{\wt{\Omega}} d_{\#}(x)\left[(\mu_1+ i\mu_2)\cdot\nabla\wt{\rho}(x;\tau)\right]\, \overline{\rho^\sharp(x;\tau)}\ \D x \\
&  +\int_{\wt{\Omega}} \left(B-\widetilde{B}\right)\cdot(\mu_1+ i\mu_2)\,\left(\wt{b}(x)+\wt{\rho}(x;\tau)\right)\overline{\left(b^\sharp(x)+\rho^\sharp(x;\tau)\right)}\ \D x.
\end{aligned}
\end{equation}
Note that, on the fourth integral in the right hand side of \eqref{int_A_4}, we did integration by-parts since $d_{\#}\overline{b^\sharp}\in H^1_0(\widetilde{\Omega})$. Since $(\mu_1+ i\mu_2)\cdot\nabla(\widetilde{\vp}-i\widetilde{\psi})=0$ (cf. \eqref{Psi}), so the first term in the rhs of \eqref{int_A_4} disappears. 
Recall that (cf. Proposition \ref{solvibility2}) we have $\lVert \wt{\rho}(\cdot;\tau)\rVert_{L^2(\O)}, \lVert \rho^\sharp(\cdot;\tau)\rVert_{L^2(\O)} = \mathcal{O}(\tau)$ and $\lVert (\mu_1+i\mu_2)\cdot \nabla_x \wt{\rho}(\cdot;\tau)\rVert_{L^2(\O)}= \mathcal{O}(1)$. 
So by taking $\tau\to 0$ in  \eqref{int_A_4} we obtain
\begin{equation}\label{int_A_5}
0=\int_{\wt{\Omega}} d_{\#}(x)\left[(\mu_1+ i\mu_2)\cdot\nabla\wt{b}(x)\right]\, \overline{b^\sharp(x)}\ \D x + \int_{\wt{\Omega}} \left(B-\widetilde{B}\right)\cdot(\mu_1+ i\mu_2)\,\wt{b}(x)\,\overline{b^\sharp(x)}\,\D x.
\end{equation}
Next, let us choose $\wt{b}(x)= e^{-ix\cdot\xi}$, since it implies $(\mu_1+i\mu_2)\cdot\nabla \wt{b} = 0$, so we obtain
\begin{equation}\label{int_B.1}
\int_{\wt{\Omega}} e^{-ix\cdot\xi} \left(B-\widetilde{B}\right)\cdot(\mu_1+ i\mu_2)\,\overline{b^\sharp(x)}\, \D x = 0,
\end{equation}
Replacing $\mu_2$ by $-\mu_2$ and doing the same analysis as before we obtain
\begin{equation}\label{int_B.2}
\int_{\wt{\Omega}} e^{-ix\cdot\xi} \left(B-\widetilde{B}\right)\cdot(\mu_1- i\mu_2)\,\overline{b^\sharp(x)}\, \D x = 0.
\end{equation}
Adding \eqref{int_B.1}, \eqref{int_B.2} and writing $\mathcal{B}=(B-\widetilde{B})$ we see
\begin{equation}\label{int_B}
\int_{\wt{\Omega}} e^{-ix\cdot\xi} \left(\mu\cdot\mathcal{B}\right)\,\overline{b^\sharp(x)}\, \D x = 0, \quad \mbox{for all }\mu \mbox{ parpendicular to }\xi.
\end{equation}
Let us take $b^\sharp(x)=1$. By extending $\mathcal{B}$ as $zero$ on $\Rb\setminus\overline{\wt{\Omega}}$ and choosing $\mu=(-\xi_2,\xi_1,0,\dots,0)$ we get
\[	(d\mathcal{B})_{jk} := \partial_{x_j}\mathcal{B}_k - \partial_{x_k}\mathcal{B}_j = 0, \quad \mbox{in }\wt{\Omega}, 
\]
where $d$ is the exterior derivative acting on the 1-form $\mathcal{B}$.
Using simply connectedness of $\wt{\Omega}$ we get $\Phi \in H^1_0(\wt{\Omega})$ such that $\mathcal{B}=\nabla\Phi$ in $\wt{\Omega}$.

Then plugging $\mathcal{B}=\nabla\Phi$ for $\Phi \in H^1_0(\wt{\Omega})$ in \eqref{int_B},  and then doing integration by parts, we obtain 
\begin{equation}\label{int_B2}
\int_{\wt{\Omega}} e^{-ix\cdot\xi}\,\Phi\,\left[(\mu_1+i\mu_2)\cdot\nabla\overline{b^\sharp(x)}\right]\, \D x = 0.
\end{equation}
Hence, by choosing $b^{\sharp}=-\mu_1\cdot x$ such that $(-\mu_1+i\mu_2)\cdot\nabla b^\sharp(x)=1$, from \eqref{int_B2} we obtain $\widehat{\Phi}(\xi)=0$ for all $\xi\in\mathbb{R}^n$, or, $\Phi\equiv0$.  Thus 
\begin{equation}\label{btb}B=\widetilde{B},\quad\mbox{ in }\widetilde{\Omega}.\end{equation}  
\subsection*{Determining the second order perturbation $A=\widetilde{A}$}
In \eqref{key}, we have already shown the difference $(A-\widetilde{A})=d_{\#}I$ in $\widetilde{\Omega}$, so it is remained to show $d_{\#}=0$. We get back to the identity \eqref{int_A_5} and put $B=\widetilde{B}$ (cf. \eqref{btb}) there and obtain  
\begin{equation}\label{int_A_20}
\int_{\wt{\Omega}} d_{\#}(x)\left[(\mu_1+ i\mu_2)\cdot\nabla\wt{b}(x)\right]\, \overline{b^\sharp(x)}\ \D x =0.
\end{equation}
Let us choose $\wt{b}(x)=(\mu_1\cdot x)e^{-ix\cdot\xi}$ and $b^\sharp=1$ in above, and we obtain $\widehat{d_{\#}}(\xi)=0$ for all $\xi\in\mathbb{R}^n$, or, $d_{\#}\equiv 0$. Thus 
\begin{equation}\label{ata}A=\widetilde{A},\quad\mbox{ in }\widetilde{\Omega}.\end{equation}  
\subsection*{Determining the potential $q=\widetilde{q}$} 
Let us put $A=\wt{A}$ and $B=\wt{B}$ in \eqref{CGOIdentityE} and observe that we end up with
\begin{equation}\label{int_q}
\int_{\wt{\Omega}} (q-\widetilde{q})\lb \wt{a}_{0}+h\wt{a}_1+
\wt{r}\rb \overline{\lb a^\sharp_{0}+ha^\sharp_1 +r^\sharp\rb }\ \D x = 0,
\end{equation}
where $\wt{a}_0$, $a_0$ satisfy \eqref{tr5}. As usual by
taking $h\to 0$, and then using the form of $\wt{a}_0, a_0$ in \eqref{aa} and then taking $\h \to 0$ we get 
\begin{equation}\label{int_q_1}
\int_{\wt{\Omega}} (q-\widetilde{q})\, \wt{b}(x)\,\overline{b^\sharp(x)}\ \D x = 0.
\end{equation}
Choosing $\wt{b}(x)=e^{-ix\cdot\xi}$, $b^\sharp(x)=1$, from \eqref{int_q_1} we obtain $\widehat{(q-\wt{q})}(\xi)=0$. Varying $\xi \in \Rb^n$ we finally obtain $q(x)=\wt{q}(x)$ in $\wt{\Omega}$.
Along with \eqref{btb} and \eqref{ata}, this completes the determination of $A=\widetilde{A}$, $B=\widetilde{B}$, and $q=\widetilde{q}$ in $\Omega$. The proof of the Theorem \ref{mainresult} is now complete. \qed

%\bibliographystyle{plain}
%\bibliography{biblio}

\end{document}